\newtheorem{theorem}{Theorem}
\let\oldac\ac
\renewcommand*\ac[1]{\textcolor{black}{\oldac{#1}}}
\theoremstyle{definition}
\newtheorem{definition}{Definition}
\theoremstyle{remark}
\newtheorem{remark}{Remark}
\def\keywords{\normalfont%
    \if@twocolumn%
    \@IEEEabskeysecsize\bfseries\textit{Index Terms}---\,\relax%
}
\newcommand{\cc}[1]{\mathcal{#1}}
\newcommand{\bb}[1]{\mathbb{#1}}
\newcommand{\agenum}{N}
\newcommand{\ageset}{[\agenum]}
\newcommand{\horizon}{T}
\newcommand{\transpose}{\mathsf{T}}
\newcommand{\state}{s}
\newcommand{\action}{a}
\begin{document}


\title{Maximum-Entropy Multi-Agent Dynamic Games: Forward and Inverse Solutions}

\author{Negar Mehr$^{1}$,
        Mingyu Wang$^{2}$, and
        Mac Schwager$^{3}$
\thanks{This work was supported in part by ONR grant N00014-18-1-2830. Toyota Research Institute provided funds to support this work.}
\thanks{$^{1}$Negar Mehr is with the Aerospace Engineering department, University of Illinois Urbana-Champaign, Urbana, IL. \tt\small negar@illinois.edu}
\thanks{$^{2}$Mingyu Wang is with the Department of Mechanical Engineering, Stanford University, Stanford, CA. \tt\small mingyuw@stanford.edu} 
\thanks{$^{3}$Mac Schwager is with Department of Aeronautics and Astronautics, Stanford University, Stanford, CA. \tt\small schwager@stanford.edu}}

\maketitle

\begin{abstract}
In this paper, we study the problem of multiple stochastic agents interacting in a dynamic game scenario with continuous state and action spaces.  We define a new notion of stochastic Nash equilibrium for boundedly rational agents, which we call the Entropic Cost Equilibrium (ECE).  We show that ECE is a natural extension to multiple agents of Maximum Entropy optimality for single agents.  We solve both the ``forward'' and ``inverse'' problems for the multi-agent ECE game.  For the forward problem, we provide a Riccati algorithm to compute closed-form ECE feedback policies for the agents, which are exact in the Linear-Quadratic-Gaussian case.  We give an iterative variant to find locally ECE feedback policies for the nonlinear case.  For the inverse problem, we present an algorithm to infer the cost functions of the multiple interacting agents given noisy, boundedly rational input and state trajectory examples from agents acting in an ECE.  The effectiveness of our algorithms is demonstrated in a simulated multi-agent collision avoidance scenario, and with data from the INTERACTION traffic dataset. In both cases, we show that, by taking into account the agents' game theoretic interactions using our algorithm, a more accurate model of agents' costs can be learned, compared with standard inverse optimal control methods.

\end{abstract}


\IEEEpeerreviewmaketitle

\section{Introduction}\label{sec:intro}
In this article, we seek to learn the cost functions of a group of interacting dynamic agents from a set of trajectory demonstrations of those interactions.  We call this problem an Inverse Dynamic Game (IDG), analogously to Inverse Reinforcement Learning (IRL)  and Inverse Optimal Control (IOC) in the single-agent setting.  To solve the inverse dynamic game, we first formulate a new notion of stochastic Nash equilibrium to describe the boundedly rational equilibrium condition found in natural human demonstrations.  We call this equilibrium an Entropic Cost Equilibrium (ECE).  We then present an algorithm to find feedback policies for agents engaged in an ECE game, that is, we solve the ``forward'' problem.  We then use this forward solution within an algorithm to infer the cost functions of the multiple agents given trajectory demonstrations from those agents, i.e., we solve the ``inverse'' problem. 

For many robotic applications, it is not trivial to design a cost function that mimics an expert's behavior, such as a human driver's actions. In such applications, a miss-specified cost function may lead to undesired behaviors, and designing the correct cost function is notoriously challenging. A common practice is to infer the cost function from experts' demonstrations through the framework of Inverse Reinforcement Learning (IRL), also sometimes called Inverse Optimal Control (IOC). An IRL algorithm 
infers the cost function by observing an agent's behavior assuming that the agent behaves approximately optimally. However, real-world robotic applications, such as autonomous driving, usually involve multiple interactive agents whose behaviors are coupled through the feedback interactions between the agents. Consequently, when learning from interactive agents' behaviors, we cannot treat them as acting in isolation. Instead, we need to take into account the game theoretic coupling between agent's behaviors. In this paper, we develop inverse methods for such interactive multi-agent settings, where we learn each agent's cost function while taking into account their feedback interactions. We call this an inverse dynamic game (IDG).

One of the main challenges in solving IDGs is that agents no longer optimize their own cost functions in isolation. Rather, they reach a notion of game theoretic equilibrium. Hence, agents' cost functions must be learned such that the learned cost functions rationalize the set of demonstrations as game theoretic equilibrium strategies, rather than optimal strategies. Moreover, when learning from experts such as humans, we need to account for the humans' noisy behavior and bounded rationality. Reaching exact equilibria requires perfect rationality of agents; however, in decision-making settings, humans' rationality is normally bounded due to the limited information they have, their cognitive limitations and the finite amount of time available for decision making. Thus, we need to account for the noise in humans' decision making in a multi-agent setting. 

In this paper, we address these challenges by defining a notion of ``noisy" equilibrium for capturing the outcome of interactions between multiple boundedly rational agents. We call this equilibrium an \emph{Entropic Cost Equilibrium} (ECE) and show its connections to the maximum entropy framework common in IRL~\cite{ziebart2008maximum}. We prove that the ECE concept is indeed an extension of maximum-entropy optimality to multi-agent settings. 
Once we formalize the notion of Entropic Cost Equilibrium, to assess the quality of a set of learned agents' cost functions, one must find the ECE policies under the learned costs and compare them to the set of agents' demonstrations. 
To enable such comparisons, we develop an algorithm to find ECE policies for a given set of agents' cost functions. We prove how ECE policies can be obtained in closed-form for a special class of games, namely linear-quadratic-Gaussian games, using a Riccati solution reminiscent of the well-known Linear-Quadratic Regulator (LQR) and Linear-Quadratic Game (LQGame) solutions. 
Leveraging this result, we provide an iterative algorithm for approximating ECE policies in general multi-agent games with general nonlinear dynamics and costs. 

Knowing how to approximate ECE policies for a set of given costs, we propose a multi-agent inverse dynamic game algorithm for learning agents' costs. Similar to the common practice in IRL~\cite{ziebart2008maximum,ziebart2010modeling,levine2012continuous,hadfield2016cooperative}, we assume that each agent's cost function is parameterized as a linear combination of a set of known features. We then propose an iterative algorithm for learning the weights of the features in each agent's cost function such that the feature expectations under the learned costs match the empirical feature average from the demonstrations. 

To demonstrate the effectiveness of our algorithm, we verify our algorithm using both synthetic datasets and real-world traffic data. First, we consider a goal-reaching and collision-avoidance scenario involving two and three agents. We show that by taking into account the agents' feedback interactions, a more accurate model of agents' costs can be learned. We then validate the performance of our algorithm on the INTERACION dataset~\cite{interactiondataset} which involves highly interactive multi-agent driving scenarios collected from different countries. We again demonstrate that by taking into account the agents' interactions through our inverse dynamic game framework, more accurate predictions of agents' behavior can be made. We show that the prediction accuracy of our framework is very close to the intelligent driver's model (IDM) \cite{treiber2000congested}, which is a highly accurate human-designed driving model used for modeling human drivers' leader-following behavior.

We summarize our contributions as follows:
\begin{itemize}
\item We define and formalize the notion of Entropic Cost Equilibrium (ECE) for capturing the interaction of noisy agents. 
\item We develop an algorithm for computing approximate ECE policies for general nonlinear multi-agent systems (the forward problem).
\item We propose an iterative algorithm for learning the agents' costs from a set of interactive demonstrations (the inverse problem).
\item We validate our proposed inverse dynamic game algorithm in both synthetic and real-world driving scenarios. 
\end{itemize}

The organization of this paper is as follows. Section~\ref{sec:rel} provides an overview of the related work. In Section~\ref{sec:prelim}, we introduce our notation and discuss the preliminaries. In~\ref{sec:equ-char}, we define the notion of Entropic Cost Equilibrium and discuss its connection to the maximum-entropy framework. We discuss finding ECE policies for general nonlinear multi-agent systems in Section~\ref{sec:eq-policies}.  Section~\ref{sec:ma-irl} provides the description of our inverse dynamic game algorithm.
Simulations and experiments with the analysis of the resulting performance are incorporated in Section~\ref{sec:experiments}. Finally, we conclude the paper and discuss future directions in Section~\ref{sec:concl}.

\section{Related Work}\label{sec:rel}
\subsection{Single-Agent Inverse Reinforcement Learning}
Inferring and learning cost functions from system trajectories has been widely studied for single-agent systems. The problem of inferring an agent's cost function was first studied by Kalman in the context of inverse optimal control for linear quadratic systems with a linear control law \cite{kalman1964linear}. Learning an agent's cost was later studied in~\cite{ng2000algorithms} and~\cite{abbeel2004apprenticeship} where the assumption was that the demonstrations satisfy optimality conditions. This assumption was relaxed to take into account the bounded rationality and human's noisy demonstrations in the framework of maximum entropy inverse reinforcement learning in \cite{ziebart2008maximum}.

Despite the success of these IRL methods, they were mostly developed for discrete state and action spaces. In~\cite{levine2012continuous}, maximum-entropy cost inference was studied for systems with continuous state and action spaces. The common assumption in these works is that the agent's cost function is paramterized as a weighted sum of features. This assumption was further relaxed in~\cite{finn2016guided}, where the underlying cost function was learned as a neural network in a maximum-entropy framework. In~\cite{ho2016generative}, a framework was proposed for directly extracting a policy from an agent's demonstrations, as if it were obtained by reinforcement learning following maximum entropy inverse reinforcement learning. 
Inferring an agent's cost function was also studied as a bilevel optimization where, in the outer loop, the cost parameters were found such that the estimation error of system trajectories or the likelihood of demonstrations was optimized~\cite{albrecht2011imitating,mombaur2010human}. Recent works have proposed learning cost parameters that minimize the residual of Karush–Kuhn–Tucker (KKT) conditions at the demonstrations~\cite{englert2017inverse,awasthi2019forward,menner2020maximum}. Our work draws inspiration from the Maximum Entropy IRL framework where the noisy behavior of the agent is captured. 
\subsection{Multi-Agent Inverse Reinforcement Learning}
Extension of single-agent IRL algorithms to the multi-agent settings have been studied in the past. Multi-agent IRL was studied for discrete state and action spaces in~\cite{natarajan2010multi}. The inverse equilibrium problem was considered in~\cite{waugh2013computational} where the maximum entropy principle and the notion of regret were employed for solving the inverse equilibrium problem. No systems dynamics were considered in~\cite{waugh2013computational}, and the inverse equilibrium problem was considered in matrix games. Inverse reinforcement learning was considered  in~\cite{lin2014multi} for two-person zero-sum stochastic games with discrete state and action spaces where the cost learning problem was formulated as an optimization problem. This framework was later extended to general-sum games with discrete state and action spaces in~\cite{lin2019multi}. In~\cite{yu2019multi}, a new framework for multi-agent inverse reinforcement learning utilizing adversarial machine learning was proposed for high-dimensional state and action spaces with unknown dynamics. In~\cite{song2018multi},
generative adversarial imitation learning in the single agent case~\cite{ho2016generative} was extended to the multi-agent setting. The current work is distinct in that it focuses on  multi-agent IRL in general-sum games with \emph{known} system dynamics and continuous state and action spaces.


Multi-agent IRL has been studied as an estimation problem too. In~\cite{schwarting2019social}, a particle filtering algorithm  was utilized for online estimation of human behavior parameters where the critical role of accurate human motion prediction was demonstrated. In~\cite{lecleac2021lucidgames}, a filtering technique based on an unscented Kalman filter was developed for online estimation of cost parameters in multi-agent settings. In~\cite{kopf2017inverse}, inverse reinforcement learning was considered for the class of linear quadratic games where the equilibrium strategies of all but one agent were known. This assumption simplified the problem and reduced it to effectively an instance of the single-agent cost inference problem.~\cite{rothfuss2017inverse} extended this by proposing to minimize the residuals of the first-order necessary conditions for open-loop Nash equilibria. 
In~\cite{inga2019inverse}, residual errors of optimality conditions for open-loop Nash equilibria were minimized in a maximum-entropy framework. In~\cite{awasthi2020inverse}, state and input constraints are also considered in a maximum-entropy residual-minimization framework. In~\cite{peters2021inferring}, agents' cost functions were estimated under partial observability. In this work, we focus on cost learning for general \emph{nonlinear} games, and find cost parameters which rationalize interactions under \emph{feedback} information structure. We further capture the \emph{noisy} behavior of humans with our Entropic Cost Equilibrium concept. In~\cite{inga2019inverse}, maximum-entropy MA-IRL was considered as a maximum likelihood problem. However, when computing the probability distribution over agents' trajectories, the coupling between the agents was not considered. Moreover, in this work, it was assumed that agents' feedback policies were known a-prioi which is a restrictive assumption. In this paper, we show how the maximum-entropy framework can be formalized in a multi-agent game theoretic setting to account for agents' feedback interactions. We further provide an algorithm which does not require knowledge of the agents' policies and verify it on a real-world data set.

\section{Preliminaries}\label{sec:prelim}

Consider $\agenum \geq 1$ agents interacting in an environment. We use $\ageset=\{1,\cdots,\agenum\}$ to refer to the set of all agents' indices. Let $s_t\in \cc{S} $ denote the vector of joint states of all agents at each time $t$, where the set $\cc{S} \subseteq \bb{R}^n$ is the joint state space observed by all agents, and $n$ is the dimension of the state space. Each agent $i\in \ageset$ decides on its action $a^i_t \in \cc{A}^i$ at time step $t$, where $\cc{A}^i \subseteq \bb{R}^{m^i} $ is the action space of agent $i$, and ${m^i}$ is the dimension of the action space of agent $i$. Throughout this paper, we use bold letters to refer to the concatenation of variables for all agents. For a given time step $t$, we use $\bm{a}_t = (a_t^1,\cdots,a_t^\agenum)$ to denote the

vector of all agents' actions at time $t$. Following the conventional notation used in the game theory literature, we utilize the superscript $-i$ to indicate all agents expect agent $i$. For example, ${\bm{a}}_t^{-i}$ represents the vector of all agents' actions excluding the action of agent $i$ at time $t$. We define $\bm{\cc{A}}=\{\cc{A}^i\}_{i=1}^\agenum$ to denote the collection of the action spaces of all the agents. 

We assume that agents choose their actions through a stochastic Markovian policy. For each agent $i$, we use $\pi^i_t(.|\state_t)$ to denote such a policy for agent $i$ where $\pi^i_t(\action^i_t|\state_t)$ encodes the probability of agent $i$ taking the action $\action^i_t$ at time $t$ given that the system is in state $\state_t$. Note that we are assuming that each agent selects its action independently i.e., $\pi^i_t(\action^i_t | \state_t, \bm{\action}^{-i}_t) = \pi^i_t(\action^i_t | \state_t)$. For a finite horizon $\horizon$, we use $\pi^i=\{\pi^i_t\}_{t=1}^\horizon$ to refer to the agent $i$'s policy for the entire horizon $\horizon$. Moreover, we use $\bm{\pi} = \{ \pi^i\}_{i=1}^\agenum$ to refer to the set of all agents' time-dependent policies. The discrete-time dynamics of the system are represented by state updates of the following form
\begin{align}
\begin{split}
    \state_{t+1} &= f(t, \state_t, \bm{\action}_t) + g(\state_t)w_t, \\
    \state_1 &\sim p_1(\state_1), \quad w_t \sim p_w.
    \end{split}
    \label{eq:dynamics}
\end{align}

where $p_1$ and $p_w$ are the distributions of the system initial state and system noise respectively. We assume that each agent $i$ has a bounded per-stage cost function $c^i: \cc{S} \times \cc{A}^1  \cdots \times \cc{A}^\agenum \rightarrow \bb{R}$. We further let $\bm{c}=\{c^i\}_{i=1}^\agenum$ represent the vector of all agents' per-stage costs.
We assume that each agent $i$ is seeking to minimize its own expected cumulative cost $\bb{E}_{{\bm{\pi}}} \sum_{t=1}^\horizon c^i(\state_t,\bm{\action}_t)$.  

We model the agents' interaction as a dynamic game. We use the notation $G=(\cc{S},\bm{\cc{A}}, f, g, \bm{c},\horizon)$ to refer to a game with a time horizon of length $\horizon$ between $\agenum$ agents whose action spaces and costs are defined via $\bm{\cc{A}}$ and $\bm{c}$ on the state space $\cc{S}$ with the prespecified dynamics~\eqref{eq:dynamics}.

It is well known that the outcome of interaction between \emph{perfectly} rational agents is best represented via Nash equilibria of the underlying game. Given a game $G=(\cc{S},\bm{\cc{A}}, f, g, \bm{c},\horizon)$, a set of (mixed strategy) Nash equilibrium policies are defined through the following. 

\begin{definition}\label{def:nash}
Given a game $G=(\cc{S},\bm{\cc{A}}, f, g, \bm{c},\horizon)$, a set of agents' policies $\bm{\pi}^*$ is a (mixed-strategy) Nash equilibrium if and only if for each agent $i \in \ageset$, we have
\begin{align}\label{eq:nash-def} 
\begin{split}
    \bb{E}_{\pi^{i^*},\bm{\pi}^{{-i}*}} &\sum_{k=1}^\horizon c^i(\state_k,\action_k^i,\bm{\action}_k^{-i}) \leq \\ &\bb{E}_{\pi^{i},\bm{\pi}^{{-i}^*}} \sum_{k=1}^\horizon c^i(\state_k,\action_k^i,\bm{\action}_k^{-i}), \,\, \forall \pi^{i}.
\end{split}
\end{align}
\end{definition}
\noindent Definition~\ref{def:nash} implies that, at a Nash equilibrium, no agent will reduce its accumulated cost by unilaterally changing its policy from the equilibrium policy $\pi^{i^*}$ to another policy $\pi^i$. 

Although Nash equilibrium is a powerful concept for modeling the interaction of agents, achieving Nash euilibria requires perfect rationality of agents. Moreover, it is well known that computing Nash equilibria is in general intractable even in normal-form games~\cite{daskalakis2009complexity}. Thus, when learning from a set of demonstrations that are collected from experts such as humans, assuming that humans have achieved a Nash equilibrium may be unreasonable.  Not only are humans computationally bounded, but they also may act under noisy information, or produce actions that are different than what they intend, making them appear to act irrationally to some degree. This concept is known in game theory as \emph{bounded rationality}.  Instead of making the ``best" choices, humans often make choices that are "satisfactory on average"~\cite{simon1955behavioral,simon1979rational}. In the next section, we generalize the notion of Nash equilibrium to capture the bounded rationality of experts such as humans during their interactions.

\section{Entropic Cost Equilibrium}\label{sec:equ-char}
In the seminal work of~\cite{mckelvey1995quantal,mckelvey1998quantal}, it was shown that to take into account the bounded-rationality of humans, their interaction can be modeled via the notion of quantal response equilibrium in normal form and extended games. It was demonstrated that the quantal response equilibrium can successfully model humans' choices in a set of lab experiments, while the predictions made by the Nash equilibrium deviated largely from the lab experiments. At quantal response equilibrium, every agent maintains a probability distribution over its actions. In fact, in quantal response equilibrium, the noisy behavior of humans is captured, where the probability of an action taken by a human is related to the cost associated to that action for the human. In \cite{anderson2004noisy}, a continuous version of the notion of quantal response equilibrium, called logit equilibrium, was developed for repeated continuous games from the perspective of evolutionary game theory. In this setup, at logit equilibrium, every agent computes its expected cost with respect to the probability distribution over actions of all agents. Each agent takes actions that are exponentially proportional to the negative of this expectation. In the following, we extend this notion of logit equilibrium to dynamic games with continuous state and action spaces. We use the multi-agent extension of Q functions and prove certain properties of this notion of equilibrium. 

Given a game $G=(\cc{S},\bm{\cc{A}}, f, g, \bm{c},\horizon)$, at every time step $t < T$, for each agent $i \in \ageset$, we define the quality of a state $\state_t$ and a vector of agents' actions $\bm{\action}_t$ under a given set of agents' policies $\bm{\pi}$ via the following
\begin{align}\label{eq:Q-fun}
\begin{split}
Q^i_{t, \bm{\pi}}(\state_t,\bm{\action}_t) = \,\,
  & c^{i}(\state_t, \bm{\action_t}) + \\
  & \bb{E}_{\state_{t+1:T},\bm{\pi}} \sum_{k=t+1}^T \left[ c^i(\state_k,\bm{\action}_k) | \state_t, \bm{\action}_t \right].
\end{split}
\end{align}

For the final time step $\horizon$, the cost associated with a state $\state_\horizon$ and a vector of actions $\bm{\action}_\horizon$ for an agent $i$ is
\begin{align}\label{eq:Q-fun-T}
Q^i_{T,\bm{\pi}}(\state_\horizon,\bm{\action}_\horizon) =  c^i(\state_\horizon,\bm{\action}_T) .
\end{align}
Equations~\eqref{eq:Q-fun} and~\eqref{eq:Q-fun-T} are the extensions of the definition of the Q function to the multi-agent game theoretic setting. Following~\cite{anderson2004noisy}, for each agent $i \in \ageset$ and every time step $t\leq \horizon$, we define
\begin{align}\label{eq:Qbar-fun}
\bar{Q}^i_{t,\bm{\pi}}(\state_t,\action^i_t) &= \bb{E}_{\bm{\action}^{-i}\sim \bm{\pi}^{-i}} Q^i_{t,\bm{\pi}}(\state_t,\bm{\action}_t).
\end{align}
In~\eqref{eq:Qbar-fun}, the expectation of the Q function is computed with respect to the actions of all the other agents $\bm{a}^{-i}$.

Note that $\bar{Q}^i_{\bm{\pi}}(s_t,a^i_t)$ depends only on the action of agent $i$ and the state, not the actions of the other agents. In fact, $\bar{Q}^i_{\bm{\pi}}(s_t,a^i_t)$ determines the quality of a pair of the system state and an agent's action given the set of other agents' policies $\bm{\pi^{-i}}$. Ideally, if agents were perfectly rational, given knowledge of the other agents' policies, at every time step, each agent would have taken actions that minimize $\bar{Q}^i_{\bm{\pi}}(s_t,a^i_t)$ at equilibrium. However, when agents are boundedly rational, we propose that agents' noisy behavior can be modeled through the following notion of equilibrium.



\begin{definition} For a given game 
$G=(\cc{S},\bm{\cc{A}}, f, g, \bm{c},\horizon)$, a set of agents' policies $\bm{\pi}^*=\{{\pi^i}^*\}_{i=1}^\agenum$ is an entropic cost equilibrium (ECE) if and only if for every agent $i \in \ageset$ and every action $a_t^i$, the following holds at every time step $t\leq \horizon$: 
\begin{align}\label{eq:equ-def}
{\pi^i_t}^*(\action_t^i|\state_t) = \frac{e^{-\bar{Q}^i_{t,\bm{\pi}^*}(\state_t,\action^i_t)}}{\int e^{-\bar{Q}^i_{t,\bm{\pi}^*}(\state_t,{\tilde{\action}_t}^i)} d{\tilde{\action}_t}^i }.
\end{align}
\end{definition}
\noindent Note that~\eqref{eq:equ-def} must hold for all agents. An equilibrium policy $\bm{\pi}^*$ is indeed the fixed point of~\eqref{eq:equ-def}, and $\bar{Q}^i_{\bm{\pi}^*}(s_t,a^i_t)$ depends on the policy of all agents. It is important to note that in entropic cost equilibrium (ECE), at every time step $t$, the agents' actions $a_t^i$ are independent.  No agent needs to know the action choice of any other agent to choose its own action. However, the probability of taking an action is implicitly dependent on the \emph{policies} of other agents ${\bm{\pi}}^{{-i}^*}$ through the expectation with respect to agents' policies in $\bar{Q}^i_{\bm{\pi}^*}(s_t,a^i_t)$. Therefore, although the agents' instantanous actions are independent, their policies, i.e. the probability distribution over their actions, are related in ECE through~\eqref{eq:equ-def}, and the ECE policies $\bm{\pi}^*$ are indeed the fixed points of~\eqref{eq:equ-def} for all agents.  This is in contrast to the definition of equilibrium in~\cite{yu2019multi} where a stochastic version of correlated equilibrium was developed, and the actions were assumed to be correlated. 

We would like to highlight the connection between~\eqref{eq:equ-def} and the maximum entropy framework. If there exists only one single agent in the environment,~\eqref{eq:equ-def} reverts to the well-known maximum entropy formulation, which is widely used in the development of both reinforcement learning and inverse reinforcement learning algorithms~\cite{levine2012continuous,levine2013guided, levine2014learning, haarnoja2017reinforcement,schulman2017equivalence}. If there is only one agent, the probability of taking an action $\action_t$ given a state $\state_t$ is proportional to the exponential of the negative accumulated cost from that state $(\state_t,\action_t)$. 

Now that we have defined ECE as our equilibrium notion for capturing humans' noisy interactions, we will prove a property of ECE which demonstrates its applicability and relevance to modeling the interaction of multiple agents with bounded rationality.

\begin{theorem}\label{theorem:max-ent-equi}
For a game $G=(\cc{S},\bm{\cc{A}}, f, g, \bm{c},\horizon)$, a set of agents' policies ${\bm{\pi}}^* = \{{\pi^i}^*\}_{i=1}^\agenum$ is an ECE if and only if $\bm{\pi}^*$ is a (mixed-strategy) feedback Nash equilibrium for the maximum entropy game $\tilde{G}=(\cc{S},\bm{\cc{A}}, f, g, \tilde{\bm{c}},\horizon)$ where $\tilde{\bm{c}}=\{\tilde{c}^i\}_{i=1}^\agenum$ is defined 
as
\begin{align}\label{eq:equ-max-ent}
    \tilde{c}^i(\state_t,\bm{\action}_t) = c^i(\state_t,\bm{\action}_t) - \cc{H}(\pi^i(\cdot|\state_t)),
\end{align}
and $\cc{H}(\pi^i_t(\cdot|\state_t))$ is the entropy of policy $\pi^i(\cdot|\state_t)$.  

\end{theorem}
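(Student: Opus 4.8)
The plan is to exploit the fact that the feedback Nash condition decouples across agents: by freezing every opponent's policy at $\bm{\pi}^{-i*}$, agent $i$'s equilibrium requirement in $\tilde{G}$ collapses to a single-agent optimal control problem. Concretely, the Nash inequality of Definition~\ref{def:nash} applied to the modified costs $\tilde{\bm{c}}$ says that $\pi^{i*}$ minimizes $\bb{E}_{\pi^i,\bm{\pi}^{-i*}}\sum_{t=1}^\horizon \big[\, c^i(\state_t,\bm{\action}_t)-\cc{H}(\pi^i_t(\cdot|\state_t))\,\big]$ over all $\pi^i$, with the opponents' actions marginalized out of both the transitions~\eqref{eq:dynamics} and the stage cost. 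This is exactly a maximum-entropy (soft) control problem for agent $i$, so each direction of the ``if and only if'' will follow once I characterize its optimizer and identify it with the Gibbs policy~\eqref{eq:equ-def}.

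First I would solve agent $i$'s induced problem by backward induction. Define a soft value $V^i_t$ and the associated soft action-value $\widehat{Q}^i_t(\state_t,\action^i_t)=\bb{E}_{\bm{\action}^{-i}_t\sim\bm{\pi}^{-i*}}\big[\,c^i(\state_t,\bm{\action}_t)+\bb{E}_{\state_{t+1}}V^i_{t+1}(\state_{t+1})\,\big]$, with terminal value fixed by~\eqref{eq:Q-fun-T}. At each stage the inner problem is $\min_{\pi^i_t(\cdot|\state_t)} \bb{E}_{\action^i_t\sim\pi^i_t}\big[\,\log\pi^i_t(\action^i_t|\state_t)+\widehat{Q}^i_t(\state_t,\action^i_t)\,\big]$ subject to $\pi^i_t$ being a probability density. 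Since $\int\pi\log\pi$ is strictly convex and the normalization constraint is linear, this variational problem has a unique minimizer, the Gibbs density $\pi^{i*}_t(\action^i_t|\state_t)\propto e^{-\widehat{Q}^i_t(\state_t,\action^i_t)}$, with optimal value the softmin $V^i_t(\state_t)=-\log\int e^{-\widehat{Q}^i_t(\state_t,\tilde{\action}^i_t)}\,d\tilde{\action}^i_t$. Strict convexity supplies both sufficiency and uniqueness, which is what makes the equivalence biconditional: any best response must equal the Gibbs policy, and the Gibbs policy is a genuine global best response by the principle of optimality. Stacking this conclusion over all $i$ converts the joint best-response (Nash) condition in $\tilde{G}$ into the simultaneous fixed-point system~\eqref{eq:equ-def}, i.e. the ECE, and conversely any fixed point of~\eqref{eq:equ-def} yields, agent by agent, a best response in $\tilde{G}$.

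The step I expect to be the main obstacle is reconciling the exponent $\widehat{Q}^i_t$ produced by the soft backup with the quantity $\bar{Q}^i_{t,\bm{\pi}^*}$ of~\eqref{eq:Q-fun}--\eqref{eq:Qbar-fun}, which is written in terms of the raw stage costs $c^i$ alone. The soft value accumulates the future entropy bonuses $\cc{H}(\pi^i_k)$, whereas $\bar{Q}^i_{t,\bm{\pi}^*}$ does not, so I would prove, by the same backward induction started from~\eqref{eq:Q-fun-T}, a lemma relating $V^i_t$ to the on-policy raw value $\bb{E}_{\bm{\pi}^*}\sum_{k\ge t}c^i$ through the entropy terms, and then verify that the exponent reduces to $\bar{Q}^i_{t,\bm{\pi}^*}$ up to a state-dependent normalizer (which cancels in the ratio~\eqref{eq:equ-def}). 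Care is needed with the nested expectation over opponents' actions in~\eqref{eq:Qbar-fun} and over the stochastic transitions, and with the boundedness and integrability of $c^i$ that guarantee the partition function is finite and the densities well defined; in the Gaussian setting underlying the paper's forward solution the policy entropies are state-independent constants, which makes this reconciliation immediate.
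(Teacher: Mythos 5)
Your overall route is the same as the paper's: fix the opponents at $\bm{\pi}^{-i^*}$, observe that the feedback Nash condition in $\tilde{G}$ reduces agent $i$'s problem to a single-agent entropy-regularized control problem, and solve that by backward induction, identifying the per-stage minimizer of $\bb{E}_{\pi^i_t}[\log\pi^i_t+\widehat{Q}^i_t]$ as the Gibbs density. The paper does the identical per-stage step by rewriting the objective as a KL divergence plus a log-partition term; your strict-convexity variational argument is equivalent, and it has the merit of making uniqueness explicit, which is what the ``only if'' direction needs and which the paper only gestures at (``follow the same reasoning''). Your softmin convention $V^i_t=-\log\int e^{-\widehat{Q}^i_t}\,d\tilde{\action}^i_t$ is also the internally consistent one for a minimization problem.

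The step you single out as the main obstacle is exactly where the difficulty lies, and your proposed resolution does not go through in general. The backup produces $\widehat{Q}^i_t(\state_t,\action^i_t)=\bb{E}_{\bm{\action}^{-i}_t\sim\bm{\pi}^{-i^*}}\big[c^i+\bb{E}_{\state_{t+1}}V^i_{t+1}(\state_{t+1})\big]$ with $V^i_{t+1}$ the soft (log-partition) value, whereas $\bar{Q}^i_{t,\bm{\pi}^*}$ in \eqref{eq:Q-fun}--\eqref{eq:Qbar-fun} uses the raw on-policy cost-to-go. Since $V^i_{t+1}(\state)$ equals the raw expected cost-to-go minus the accumulated future policy entropies, the two exponents differ by $\bb{E}_{\state_{t+1}\mid \state_t,\bm{\action}_t}\big[\textstyle\sum_{k>t}\cc{H}(\pi^{i^*}_k)\big]$, which depends on $\action^i_t$ through the transition kernel. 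It is therefore \emph{not} a state-dependent normalizer and does not cancel in the ratio \eqref{eq:equ-def}; it reduces to a constant only when the equilibrium policy entropies are state-independent (as in the LQG case you mention, where the covariances \eqref{eq:sigma} do not depend on the state). So either the theorem must be read with $\bar{Q}$ taken as the $Q$-function of the augmented game $\tilde{G}$ (i.e., with $\tilde{c}^i$ substituted into \eqref{eq:Q-fun}), or an extra hypothesis is needed. Note that the paper's own proof makes the same silent substitution when it asserts that \eqref{eq:2} ``can be rewritten'' as \eqref{eq:3} using \eqref{eq:Qbar-fun}, so you have located a genuine imprecision rather than introduced one --- but the cancellation lemma you defer to would fail in the general nonlinear, stochastic setting, and this is the one piece of the argument that cannot be waved through.
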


\begin{proof}
First, we show that if $\bm{\pi}^*$ is a feedback Nash equilibrium for the game $\tilde{G}$, then, $\bm{\pi}^*$ is an ECE for the game $G$. Let $\bm{\pi}^*$ be a feedback Nash equilibrium of game $\tilde{G}$. Fix an agent $i \in \ageset$ and the policy of all the other agents $\bm{\pi}^{{-i}^*}$. Then, at Nash equilibrium of game $\tilde{G}$, the agent's policy ${\pi}^{i^*}$ optimizes the following
\begin{align}\label{eq:nash-1}
    \min_{\pi^i} \, \mathbb{E}\sum_{k=1}^T \left(c^i(\state_k,\action_k^{i},{\bm{\action}_k^{{-i}^*}}) - \mathcal{H}(\pi^i_k(\cdot|\state_k)) \right),
\end{align}
where the expectation is with respect to ${\bm{\action}_k^{{-i}^*}\sim \bm{\pi}_k^{{-i}^*}}$, and ${\action_k^{i} \sim \pi_k^i}$.
We can solve the above for finding Nash equilibrim policies $\pi^{i^*}$ using dynamic programming. Starting from the final time step $\horizon$, for each state $\state_\horizon$, we have
\begin{align}
\min_{\pi^i_\horizon} \mathbb{E} \left( c^i(\state_\horizon,\action_\horizon^{i},\bm{\action}_\horizon^{{-i}^*}) - \mathcal{H}(\pi_\horizon^i(\cdot|\state_\horizon)) \right), \label{eq:eq:rew}
\end{align}
where the expectation is with respect to ${\bm{\action}_\horizon^{{-i}^*}\sim \bm{\pi}_\horizon^{{-i}^*}}$, and ${\action_\horizon^{i} \sim \pi_\horizon^i}$. \eqref{eq:eq:rew} can be rewritten as
\begin{align} \label{eq:eq:exp-sep}
\min_{\pi^i_\horizon} \mathbb{E}_{ \pi^i_\horizon} \left(  \mathbb{E}_{{\bm{\pi}_\horizon^{{-i}^*}}} \big[ c^i(\state_\horizon,\action_\horizon^{i},\bm{\action}_\horizon^{{-i}^*}) \big] - \mathcal{H}(\pi_\horizon^i(\cdot|\state_\horizon))\right).
\end{align}
Define the following
\begin{align}
 \bar{c}^i(\state_\horizon,\action_\horizon^i)= \mathbb{E}_{\bm{\action^{{-i}^*}}_\horizon\sim{\bm{\pi}_\horizon^{{-i}^*}}} \left( c^i(\state_\horizon,\action_\horizon^{i},\bm{\action}_\horizon^{{-i}^*}) \right).   
\end{align}
Then,~\eqref{eq:eq:exp-sep} can be rewritten as minimizing the following:
\begin{align} \label{eq:eq:exp-sep-1}
&\mathbb{E}_{ \pi^i_\horizon} \left[ \bar{c}^i(\state_\horizon,\action_\horizon^i)  - \mathcal{H}(\pi_\horizon^i(\cdot|\state_\horizon)) \right] =\\
&  \mathbb{E}_{ \pi^i_\horizon} \left[ \bar{c}^i(\state_\horizon,\action_\horizon^i)  + \log(\pi_\horizon^i(\cdot|\state_\horizon)) \right] \label{eq:cost-to-KL}
\end{align}
Now, following the results in~\cite{levine2018reinforcement},~\eqref{eq:cost-to-KL} can be rewritten as
\begin{align}\label{eq:cost-KL}
\begin{split}
      D_{\text{KL}} \Big(\pi(.|s_t) \big\Vert \frac{1}{\exp{\left(V^i_\horizon(\state_\horizon)\right)}}& \exp{\left(-\bar{c}^i(\state_\horizon,\action_\horizon^i)\right)} \Big) + V^i_\horizon(\state_\horizon),
    \end{split}
\end{align}
where $D_{\text{KL}}$ denotes the KL divergence, and $V^i_\horizon(\state_\horizon)$ is
\begin{align}
    V^i_\horizon(\state_\horizon) = \log \int e^{-\bar{c}^i(\state_\horizon,\tilde{\action}_\horizon^i)} d \tilde{\action}_\horizon^i.
\end{align}
Now, to minimize~\eqref{eq:cost-KL}, note that $\exp\left( V^i_\horizon(\state_\horizon)\right)$ is a constant for a given state $s_T$. Since the KL divergence is minimum when the two arguements are the same, the policy which minimizes~\eqref{eq:cost-KL} is 

\begin{align}
    \pi_\horizon^{i^*} (\action^i_{T}| \state_\horizon) = \frac{e^{-\bar{c}^i(\state_\horizon,\action_\horizon^i)}}{e^{V^i_\horizon(\state_\horizon)}}.
\end{align}

For every state $s_\horizon$, $V^i(s_\horizon)$ is indeed the cost-to-go (or value function) of agent $i$ at time $T$. We can use using dynamic programming to propagate this cost-to-go backwards in time and find the equilibrium policy for agent $i$ at time $\horizon-1$. At time $t-1$, we need to solve for
\begin{align}\label{eq:1}
\begin{split}
    \min_{\pi_{\horizon-1}^i} \,\, &\mathbb{E} \big[ c^i(\state_{\horizon-1},\action_{\horizon-1}^i,\bm{\action}_{ \horizon-1}^{{-i}^*}) \\ &\qquad\qquad -\mathcal{H}(\pi_{\horizon -1}^i(\cdot|\state_{\horizon -1})) + \mathbb{E}_{\state_\horizon}( V^i_\horizon(\state_\horizon))\big],
    \end{split}
\end{align}

\noindent where the first expectation is with respect to $\action_{ \horizon-1}^i \sim \pi^i_{\horizon -1}$ and $\bm{\action}_{\horizon -1}^{{-i}^*} \sim \bm{\pi}^{{-i}^*}_{\horizon -1}$. Again, we can separate the expectations in~\eqref{eq:1} as
\begin{align}\label{eq:2}
\begin{split}
    \min_{\pi_{\horizon-1}^i} \mathbb{E}_{\pi_{\horizon -1}^i} \Big( \mathbb{E}_{ \bm{\pi}^{{-i}^*}_{\horizon -1}} \left[ c^i(\state_{\horizon -1},\action_{\horizon -1}^i,\bm{\action}_{\horizon -1}^{{-i}^*}) +  \mathbb{E}_{\state_\horizon} V^i_\horizon(\state_\horizon) \right] \\ - \mathcal{H}(\pi_{\horizon -1}^i(\cdot|\state_{\horizon -1})) \Big).
    \end{split}
\end{align}
Then, using~\eqref{eq:Qbar-fun} we see that~\eqref{eq:2} can be rewritten as
\begin{align}\label{eq:3}
\begin{split}
    \min_{\pi_{\horizon-1}^i} \mathbb{E}_{ \pi_{\horizon -1}^i} \Big[ \bar{Q}^i_{T-1}(\state_{\horizon -1},\action_{\horizon -1}^i) - \mathcal{H}(\pi_{\horizon -1}^i(\cdot|\state_{\horizon -1})) \Big].
    \end{split}    
\end{align}
Thus, similar to minimizing~\eqref{eq:cost-to-KL}, to minimize~\eqref{eq:3}, the equilibrium policy for agent $i$ at time step $T-1$ is 
\begin{align}
    \pi^{i^*}_{\horizon -1}(\action_{\horizon -1}^i|\state_{\horizon -1}) = \frac{e^{-\bar{Q}_{\horizon -1}^i (\state_{\horizon -1},\action_{\horizon -1}^i)}}{e^{V_{\horizon -1}^i(\state_{\horizon -1})}},
    \end{align}
\noindent where the cost-to-go $V^i_{\horizon -1}(\state_{\horizon -1})$ is defined as
$$V_{\horizon-1}^i(\state_{\horizon -1}) = \log \int e^{-\bar{Q}_{\horizon -1}^i (\state_{\horizon -1},\tilde{\action}_{\horizon -1}^i)} d\tilde{\action}_{\horizon -1}^i.$$

Repeating this procedure, we can solve for the mixed-strategy Nash equilibrium policies backwards in time. Thus, we show via induction that mixed-strategy Nash equilibrium policies of the game $\tilde{G}=(\cc{S},\bm{\cc{A}}, f, g, \tilde{\bm{c}},\horizon)$ are in fact the ECE policies of the original game ${G}=(\cc{S},\bm{\cc{A}}, f, g, {\bm{c}},\horizon)$. To prove the reverse, i.e., every ECE of the game $G$ is a mixed strategy Nash equilibrium for $\tilde{G}=(\cc{S},\bm{\cc{A}}, f, g, \tilde{\bm{c}},T)$, we follow the same reasoning.  Starting from the final time step, one can show that ECE policies optimize the cost-to-go in the game $\tilde{G}$, and use induction to prove that ECE policies of $G$ are indeed Nash equilibria of $\tilde{G}$.
\end{proof}

Theorem~\ref{theorem:max-ent-equi} connects ECE to the Nash equilibria of a game between agents who aim to maximize the entropy of their policy while minimizing their accumulated cost. In other words, ECE is in fact an extension of the maximum entropy-framework in the single-agent setting to the setting of multiple interactive agents. It is well known that when it comes to learning from demonstrations in the single-agent scenarios, to capture the bounded rationality and noisiness of the demonstrator, the demonstrator is best modeled via the maximum-entropy framework~\cite{ziebart2008maximum,ziebart2010modeling}. Theorem~\ref{theorem:max-ent-equi} extends this notion to multi-agent games. We will use Theorem~\ref{theorem:max-ent-equi} in the remainder of this paper for computing the ECE policies in general dynamic games. 
\begin{remark}
The notion of ECE can incorporate a temperature parameter $\gamma^i$ for each agent such that $p(a^i|s_t) \propto \exp{\left(\frac{-\bar{Q}^i_{\bm{\pi}}(s_t,a^i_t)}{\gamma^i}\right)}$. This in turn will lead to an additional weight $\gamma^i$ on the entropy of each agent's policy in~\eqref{eq:equ-max-ent}. The temperature weight $\gamma^i$ captures each agent's rationality. The higher $\gamma^i$ is, the noisier the agent acts. If for each agent $i\in \ageset$, $\gamma^i \rightarrow \infty$, then in the limit, all agents act completely randomly with a uniform probability distribution over actions. On the other hand, when $\gamma^i \rightarrow 0$ for all agents, the Nash equilibrium policies are recovered. Therefore, the temperature coefficient $\gamma^i$ reflects the rationality of each agent. For simplicity, in this paper, we assume that $\gamma^i=1$ for all agents.
\end{remark}

\section{Solving for ECE Policies}\label{sec:eq-policies}
So far, we have defined ECE for capturing the interaction of boundedly rational and noisy agents. In this section, we give an algorithm for computing ECE policies for a given game $G=(\cc{S},\bm{\cc{A}}, f, g, \bm{c},\horizon)$ using Theorem~\ref{theorem:max-ent-equi}.  In the next section we use this ``forward'' solution to learn cost function parameters given trajectory demonstrations, i.e., we solve the inverse dynamic game.  
To solve the maximum entropy dynamic game, first, we prove that ECE policies can be computed in closed-form for the class of linear quadratic Gaussian games using a Riccati solution similar to the classic LQR~\cite{kalman1964linear} and LQGames~\cite{bacsar1998dynamic} solutions. Then, we extend this algorithm to general nonlinear dynamic games through iterative linear-quadratic approximations, similar to Differential Dynamic Programming~\cite{jacobson1968new}, iLQR~\cite{tassa2014control}, and iLQGames~\cite{fridovich2019efficient}. 


\subsection{Maximum Entropy Linear Quadratic Gaussian Games}
Consider a class of games $G=(\cc{S},\bm{\cc{A}}, f, g, \bm{c},\horizon)$ where the system dynamics are linear, and the system noise is normally distributed, i.e., the state update~\eqref{eq:dynamics} is of the following form
\begin{align}
\begin{split}
    \state_{t+1} &= A \state_t + \sum_{j\in \ageset} B^j \action^j_t + w_t \label{eq:dynamics-linear},\\
     w_t &\sim \mathcal{N}(0,I),
\end{split}
\end{align}
where $A \in \bb{R}^{n\times n}$, and $B^j \in \bb{R}^{m^j \times m^j}$ are known time-invariant matrices of appropriate dimensions, and $w_t$ is a zero-mean normally distributed random variable with covariance matrix being identity. Note that in general, the dynamics matrices can be time-variant, and the process noise can be any normal distribution. Here, for the ease of description, we assume system dynamics are linear time-invariant subject to process noise with covariance matrix being identity. Moreover, consider the class of cost functions where every agent $i \in \ageset$ minimizes a convex quadratic cost function for every time step $t \leq  \horizon$:
\begin{align}\label{eq:quadratic-stage-cost}
\begin{split}
    c^i(\state_t,\action^1_t,\cdots,\action^\agenum_t) =  \frac{1}{2}\big( \state_t^\transpose Q^i \state_t + {l^i}^\transpose s_t + \\ \qquad \sum_{j\in \ageset}  {\action_t^j}^\transpose R^{ij} {\action^j_t}\big),
\end{split}
\end{align}
where $Q^i \in \bb{R}^{n \times n}$ is a positive semi-definite matrix, and $l^i$ is a vector capturing the affine penalty of agent $i$ for the system state. Moreover for every agent $j \in \ageset, j\neq i$, $R^{ij} \in \bb{R}^{m^j \times m^j}$ is a positive semi-definite matrix while $R^{ii}$ is a positive definite matrix. Equations~\eqref{eq:dynamics-linear} and~\eqref{eq:quadratic-stage-cost} define the class of linear quadratic games where the dynamics are linear and the stage costs are quadratic in the states and actions.
\begin{definition}
A given game $G=(\cc{S},\bm{\cc{A}}, f, g, \bm{c},\horizon)$ is a linear quadratic Gaussian game if its dynamics are of the form~\eqref{eq:dynamics-linear}, and further, for each agent $i \in \ageset$, the cost function $c^i$ is of the form~\eqref{eq:quadratic-stage-cost}.
\end{definition}

 Note that for a linear quadratic Gaussian game $G$, in the resulting maximum entropy game $\tilde{G}$, every agent $i \in \ageset$ minimizes the following accumulated cost when we fix the policy of all the other agents to be $\bm{\pi}^{-i}$,
\begin{align}\label{eq:quadratic-cost-accumlated}
\begin{split}
    \min_{\pi^i} \, \mathbb{E}_{\pi^i} \mathbb{E}_{ \bm{\pi}^{-i}} & \sum_{k=1}^{\horizon} \frac{1}{2}\Bigg(  \state_k^\transpose Q^i \state_k + {l^i}^\transpose \state_k + \\ &\sum_{j\in \ageset}  {\action_k^j}^\transpose R^{ij} {\action^j_k}\Bigg) -  \sum_{k=1}^{T} \mathcal{H} (\pi^i(.|s_k).
\end{split}
\end{align}

Now that we have defined the class of linear-quadratic games, using~\eqref{eq:quadratic-cost-accumlated} and Theorem~\ref{theorem:max-ent-equi}, we will prove how ECE policies can be obtained in closed form for this class of games.

\begin{theorem}\label{theorem:LQ-game} Consider a linear quadratic Gaussian game $G=(\cc{S},\bm{\cc{A}}, f, g, \bm{c},\horizon)$. For every agent $i \in \ageset$, at every time step $t < \horizon$, the ECE policy $\pi^{i^*}_t$ is a normal distribution $\pi_t^{i^*} \sim \cc{N}(\mu^i_t,\Sigma^i_t)$ where the mean $\mu^i_t$ and the covariance $\Sigma^i_t$ are computed by
\begin{align}
    \mu_t^i &= -P_t^i \state_t - \alpha_t^i, \label{eq:mu}\\
    \Sigma_t^i &= (R^{ii} + {B^i}^\transpose Z_{t+1}^i B^i)^{-1}. \label{eq:sigma}
\end{align}
The matrices $P_t^i$ and the vectors $\alpha_t^i$ satisfy the following sets of linear equations
\begin{align}
\begin{split}
\left[R^{ii} + {B^i}^\transpose Z_{t+1}^i B^i\right] P_t^i + {B^i}^\transpose Z_{t+1}^i \sum_{j \in \ageset, j\neq i} & B^j P_t^j = \\ &{B^i}^\transpose Z_{t+1}^i A,    
\end{split} \label{eq:P-recur}\\
\begin{split}
\left[R^{ii} + {B^i}^\transpose Z_{t+1}^i B^i\right] \alpha_t^i + {B^i}^\transpose Z_{t+1}^i \sum_{j \in \ageset, j\neq i} & B^j \alpha_t^j = \\ &{B^i}^\transpose \xi_{t+1}^i,    
\end{split}\label{eq:alpha-recur}
\end{align}
where $Z_t^i$, $\xi_t^i$ are recursively computed from the following
\begin{align}
    Z_t^i &= F_t^{\mathsf{T}} Z_{t+1}^i F_t + \sum_{j\in \ageset} {P_t^j}^\transpose R^{ij} P_t^j + Q^i \label{eq:Z-update} \\
    \xi_t^i &= F_t^\transpose (\xi_{t+1}^i + Z_{t+1}^i \beta_t) + \sum_{j\in \ageset } {P_t^j}^\transpose R^{ij}\alpha_t^j , \label{eq:xi-update}
\end{align}
where 
\begin{align}
    F_t &= A_t - \sum_{j\in\ageset} B^j P_t^j \label{eq:F-update} \\ \beta_t &= -\sum_{j\in\ageset} B^j \alpha^j_t. \label{eq:beta_update}
\end{align}
The terminal conditions for Equations~\eqref{eq:Z-update} and~\eqref{eq:xi-update} are 
\begin{align}\label{eq:terminal-cost-init}
\xi^i_\horizon = l^i, \quad Z_{\horizon} = Q^i.
\end{align}
\end{theorem}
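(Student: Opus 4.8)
The plan is to invoke Theorem~\ref{theorem:max-ent-equi} to replace the ECE condition by the requirement that $\bm{\pi}^*$ be a feedback Nash equilibrium of the maximum-entropy game $\tilde{G}$ with costs $\tilde{c}^i$ in~\eqref{eq:equ-max-ent}, and then to solve that game by backward dynamic programming exactly as in the proof of Theorem~\ref{theorem:max-ent-equi}, now exploiting the linear-quadratic-Gaussian structure. The central ansatz, to be verified by backward induction on $t$, is that each agent's optimal cost-to-go is quadratic in the state, $V^i_t(\state_t) = \tfrac{1}{2}\state_t^\transpose Z^i_t \state_t + {\xi^i_t}^\transpose \state_t + \text{const}$, with terminal data $Z^i_\horizon = Q^i$ and $\xi^i_\horizon = l^i$ as in~\eqref{eq:terminal-cost-init}, which I would read off directly from the terminal stage cost~\eqref{eq:quadratic-stage-cost} once the soft-min over $\action^i_\horizon$ absorbs the quadratic control penalty into the normalizing constant.

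For the inductive step I would assume the quadratic form of $V^i_{t+1}$ and assemble $\bar{Q}^i_t(\state_t,\action^i_t)$ from~\eqref{eq:Qbar-fun}: substituting the linear dynamics~\eqref{eq:dynamics-linear} into $\mathbb{E}_{\state_{t+1}}[V^i_{t+1}]$ and taking expectations over $w_t\sim\mathcal{N}(0,I)$ and over the other agents' Gaussian actions $\bm{\action}^{-i}_t\sim\bm{\pi}^{-i}$. The noise covariance and the covariances $\Sigma^j_t$ contribute only trace terms that fold into the state-independent constant, leaving $\bar{Q}^i_t$ a convex quadratic in $\action^i_t$ whose Hessian is exactly $R^{ii}+{B^i}^\transpose Z^i_{t+1}B^i$. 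Because the ECE policy is proportional to $e^{-\bar{Q}^i_t}$, a Gaussian-integral identity shows it is the normal distribution whose covariance is the inverse Hessian, giving~\eqref{eq:sigma}, and whose mean is the unique minimizer of $\bar{Q}^i_t$ in $\action^i_t$, which is affine in $\state_t$ and hence of the form $\mu^i_t=-P^i_t\state_t-\alpha^i_t$ as in~\eqref{eq:mu}.

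The crux, and the step I expect to be the main obstacle, is that agent $i$'s minimizer depends on the other agents' mean actions, since the cross terms $\action^i_t{}^\transpose{B^i}^\transpose Z^i_{t+1}B^j\action^j_t$ arising from the shared dynamics couple the agents' first-order conditions. I would set $\nabla_{\action^i_t}\bar{Q}^i_t=0$, replace each $\action^j_t$ with $j\neq i$ by its mean $\mu^j_t=-P^j_t\state_t-\alpha^j_t$, and then match the coefficient of $\state_t$ and the constant term separately. Matching the $\state_t$ coefficients across all agents simultaneously yields the coupled linear system~\eqref{eq:P-recur} for the gains $\{P^j_t\}$ with right-hand side ${B^i}^\transpose Z^i_{t+1}A$, while matching the constants yields the analogous system~\eqref{eq:alpha-recur} for the offsets $\{\alpha^j_t\}$ with right-hand side ${B^i}^\transpose\xi^i_{t+1}$; care is needed to verify solvability, which follows from positive definiteness of $R^{ii}$ and positive semidefiniteness of $Z^i_{t+1}$.

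Finally I would close the induction by substituting the optimal Gaussian policy back into the soft-min to obtain $V^i_t$. Writing $\action^j_t=\mu^j_t=-P^j_t\state_t-\alpha^j_t$ in the closed-loop dynamics identifies the propagation matrix $F_t=A-\sum_{j\in\ageset} B^j P^j_t$ and the drift $\beta_t=-\sum_{j\in\ageset} B^j\alpha^j_t$ of~\eqref{eq:F-update}--\eqref{eq:beta_update}. Collecting the quadratic-in-$\state_t$ contributions, namely the propagated $F_t^\transpose Z^i_{t+1}F_t$, the state cost $Q^i$, and the control penalties $\sum_{j\in\ageset}{P^j_t}^\transpose R^{ij}P^j_t$, reproduces~\eqref{eq:Z-update}, and collecting the linear-in-$\state_t$ contributions reproduces~\eqref{eq:xi-update}; the entropy and all trace terms are absorbed into the constant and do not affect $Z^i_t,\xi^i_t$. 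This verifies the ansatz at time $t$ and completes the backward induction, establishing the claimed closed-form ECE policies.
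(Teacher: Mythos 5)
Your proposal is correct and follows the same skeleton as the paper's proof: reduce to the maximum-entropy game via Theorem~\ref{theorem:max-ent-equi}, posit a quadratic value function $V^i_t(\state_t)=\tfrac{1}{2}\state_t^\transpose Z^i_t\state_t+{\xi^i_t}^\transpose\state_t+n^i_t$, propagate it backwards, and obtain~\eqref{eq:P-recur}--\eqref{eq:beta_update} by matching the state-linear and constant parts of the coupled first-order conditions. The one place you genuinely diverge is in how Gaussianity of the equilibrium policy is established. The paper first argues abstractly that, because the linear-quadratic structure makes the expected cost depend only on the first and second moments of $\pi^i_t$, the maximum-entropy distribution with those moments fixed must be normal (citing Cover--Thomas), and then carries out an explicit finite-dimensional minimization over $(\mu^i_t,\Sigma^i_t)$ — differentiating the objective, including the $-\tfrac{1}{2}\log\det\Sigma^i_t$ entropy term and the trace terms, to get~\eqref{eq:mu} and~\eqref{eq:sigma}. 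You instead read the Gaussian form directly off the softmax characterization $\pi^{i*}_t\propto e^{-\bar Q^i_t}$ inherited from the proof of Theorem~\ref{theorem:max-ent-equi}: since $\bar Q^i_t$ is a convex quadratic in $\action^i_t$ with Hessian $R^{ii}+{B^i}^\transpose Z^i_{t+1}B^i$, the normalized exponential is immediately the normal with covariance the inverse Hessian and mean the minimizer. Your route is more economical (no $\log\det$ calculus, no appeal to the moment-constrained maximum-entropy theorem), while the paper's makes explicit that the search over policies can be restricted to Gaussians before any fixed-point reasoning; both land on identical equations, including the solvability of the coupled systems from $R^{ii}\succ 0$ and $Z^i_{t+1}\succeq 0$ and the same treatment of the terminal stage.
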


\begin{proof}
The proof can be found in Appendix~\eqref{appendix-proof-thereom2}.
\end{proof}

Note that Theorem~\ref{theorem:LQ-game} is in fact an extension of Maximum Entropy LQR derived in~\cite{ziebart2010modeling} to the multi-agent setting. Equations~\eqref{eq:P-recur},~\eqref{eq:alpha-recur},~\eqref{eq:Z-update},~\eqref{eq:xi-update} are the extensions of the Riccati backward recursion to the multi-agent setting. Indeed, these equations are similar to the backward recursions for deterministic LQ games derived in~\cite{bacsar1998dynamic}. Theorem~\ref{theorem:LQ-game} suggests that for Maximum Entropy linear quadratic games, the optimal policy is obtained by a normal distribution whose mean is found by solving LQ games. Then, the variance of the normal policies at every time step is found via~\eqref{eq:sigma}.


\subsection{General Nonlinear Games}
Theorem~\ref{theorem:LQ-game} provides a closed-loop expression for computing ECE policies of linear quadratic Gaussian games over a finite horizon of time $\horizon$. Inspired by~\cite{fridovich2019efficient,wang_game-theoretic_2020}, we will leverage this result for approximating the ECE trajectories in general games with nonlinear dynamics and non-quadratic cost functions. We assume that each agent $i$ can minimize a possibly nonlinear cost function of the form
\begin{equation}
\label{eq:additive_cost}
    c^{i}(\state_t,\bm{\action}_t) = 
        v_{\state,t}^{i}(\state_{t}) + \sum_{j\in \ageset} a_t^{j^\transpose} R^{ij} a_t^j,
\end{equation}
where $v_{\state,t}^{i}(\state_{t})$ is a nonlinear state cost. Note that in general, we could have non-quadratic nonlinear costs on the actions as well; however, since~\eqref{eq:additive_cost} captures a wide range of applications, we consider cost functions of the form~\eqref{eq:additive_cost}.

To approximate ECE policies, we propose an iterative algorithm where we start by a nominal trajectory. Then, we linearize the dynamics, find a quadratic approximation of the cost function for every agent, and solve the resulting Maximum Entropy linear quadratic Gaussian game. We then update our reference trajectory and repeat this process until convergence (see Algorithm~\ref{alg:ERE-trajectory}). Consequently, our algorithm shares a similar structure with DDP~\cite{jacobson1968new,tassa2014control}
and iLQR methods~\cite{li2004iterative}. More precisely, we start with a nominal sequence of the gain matrices and offset vectors $\{P_{t}^{i}, \alpha_{t}^{i}\}$ for every time step $t\in \horizon,$ and every agent $i \in \ageset$.  If such a nominal sequence of control matrices is not available, a trivial initialization is initializing all matrices to zeros. We choose the mean value found by~\eqref{eq:mu} as our reference controller. Then, at every iteration, a sequence of states and actions $\eta = \{\bar{\state}, \bar{\bm{\action}}^1= ({\mu}^{1}_1,\cdots,{\mu}^1_\horizon),\cdots, \bar{\bm{\action}}^\agenum=({\mu}_1^{\agenum},\cdots, \mu_\horizon^\agenum)\}$ is found by forward simulating the system dynamics using the nominal control inputs (which are chosen to be $\mu^i_t$ for each agent $i$ at time $t$). We then linearize the system dynamics~\eqref{eq:dynamics} around this nominal trajectory to obtain
\begin{equation}
\label{eq:linear_approx}
\delta \state_{t+1} \approx A_{t} \delta \state_{t} + \sum_{j\in \ageset }B_{t}^{j}\delta \action_{t}^{j},
\end{equation}
where $\delta s_{t} = s_{t} - \bar{s}_{t}$, $\delta a_{t}^{i} = a_{t}^{i} - \bar{a}^{i}_{t}$, and $A_{t} = D_{\state}f(\cdot)$ and $B_{t}^{j} = D_{\action_{t}^{j}}f(\cdot)$ are the Jacobians of the system dynamics~\eqref{eq:dynamics} with respect to the state $\state_{t}$, and actions $\action_{t}^{j}$, respectively. We further acquire a quadratic approximation of the cost function for each agent. For each agent $i \in \ageset$, we let
\begin{equation}
\label{eq:ileq_quad_approx}
    c^{i}(\bar{\state}_{t}+\delta \state_{t},\bar{\bm{\action}}+\delta \bm{\action}) \approx v_{\state,t}^{i}(\bar{\state}_{t}) + \frac{1}{2}\delta \state_{t}^{\transpose} H_{t}^{i} \delta \state_{t} + l_{t}^{i\transpose} \delta \state_{t},
\end{equation}
denote such an approximation
where $H_{t}^{i} = D_{\state_{t}\state_{t}}v_{\state,t}^{i}(\cdot)$ and $l_{t}^{i}= D_{\state_{t}}v_{\state,t}^{i}(\cdot)$ are the Hessian and the gradient of the cost function $v_{\state,t}^{i}(\cdot)$ with respect to $\state_{t}$. Note that our formulation only considers nonlinear costs on state variables, and the dependence on agents' actions is quadratic. For a more general case, where the cost function is nonlinear in control actions too, a similar approximation could be used to derive the quadratic terms and linear terms in $\action_{t}^{i}$. Note that all the approximations $A_{t}, B_{t}^{j}, H_{t}^{j}, l_{t}^{j}$ are evaluated at $\eta$.

For the linearized system dynamics~\eqref{eq:linear_approx} and quadratized cost~\eqref{eq:ileq_quad_approx}, we reach a new approximated linear quadratic Gaussian game with new variable sequences $\delta \state, \delta \bm{\action}^{1}, \cdots, \delta \bm{\action}^{\agenum}$. These approximations result in a new game that can be solved using Theorem~\ref{theorem:LQ-game}. Once the approximated game is solved, we obtain a new sequence of mean control actions
\begin{equation}
\label{eq:ileq_delta_u}
\begin{split}
    \{\bar{\action}_{t}^{i} + \delta \action_{t}^{i^*}, t = 0,\cdots, \horizon-1\},
\end{split}
\end{equation}
where $\delta \action_{t}^{i^*}$ is the mean value of the action distribution found by solving for the ECE policies of the approximated linear quadratic Gaussian game. 
A new $\bar{\state}_{t}$ is attained from the forward simulation of the original system dynamics~\eqref{eq:dynamics} using the newly obtained nominal control actions.
We repeat the above process until convergence, i.e., the deviation of the new state trajectory from the state trajectory in the previous iteration lies within a desired tolerance. Note that in our iterative process, we always forward simulate the mean value of the distribution of agents' action. Once the mean trajectories converge, we sample ECE policies by sampling control actions from~\eqref{eq:mu} and~\eqref{eq:sigma} computed around the converged mean trajectory. 

\begin{remark}
Applying $a_{t}^{i}$ directly from (\ref{eq:ileq_delta_u}) may lead to non-convergence since the resulting trajectory could deviate too much from the original non-linear system which we approximated around $\eta_{t}$. As in other iterative linear quadratic methods \cite{yakowitz2012algorithms, van2012motion, fridovich2019efficient}, we only take a small step in the proposed direction. At each iteration, rather than~\eqref{eq:ileq_delta_u},  we apply the following control input
\begin{equation}
\label{eq:ileq_delta_u_withstep}
     \bar{a}_{t}^{i} - P_{t}^{i}\delta s_{t} - \epsilon \alpha_{t}^{i},
\end{equation}
where $\epsilon$ is the step size for improving our control strategy. Initially, we set $\epsilon = 1$. Drawing inspiration from line search method in optimization problems, we decrease $\epsilon$ by half until the new trajectory's deviation from the nominal trajectory is within a threshold. 
\end{remark}

\begin{algorithm}
    \caption{Approximating ECE Trajectories}
    \label{alg:ERE-trajectory}
    \begin{algorithmic}[1]
        \State \textbf{Inputs}
        \State system dynamics (\ref{eq:dynamics}), agents' cost functions (\ref{eq:additive_cost})
        \State \textbf{Initialization}
        \State initialize the control policy using $P_{t}^{i} = 0$, and $\alpha_{t}^{i} = 0$, $\forall i \in \ageset$
        \State forward simulation and obtain $(\bar{\state}_{0}, \bar{\state}_{1}, \cdots, \bar{s}_{\horizon})$, $\bar{\bm{a}}^i$, $\cdots, \bar{\bm{a}}^\agenum$
        \While{not converged}
            \State linear approximation of (\ref{eq:dynamics}) 
            \State quadratic approximation of (\ref{eq:additive_cost})
            \State solve the backward recursion with (\ref{eq:mu}-\ref{eq:terminal-cost-init})
            \State forward simulation using $\mu^j$'s and obtain the new trajectories
         \EndWhile
        \State \Return $P_{t}^{i}$, $\alpha_{t}^{i}$, and $Z_{t+1}^i$
    \end{algorithmic}
\end{algorithm}

\section{Learning Agents' costs}\label{sec:ma-irl}
So far, we have discussed the concept of noisy equilibrium, ECE, that we adopt and how to solve for approximate equilibrium policies under this model. Now, we are ready to discuss how to learn agents' cost functions $c^i$ from a set of interaction demonstrations from humans or other experts. The main intuition behind our algorithm is similar to the single-agent Maximum Entropy IRL~\cite{ziebart2008maximum}. The common assumption in single-agent IRL is that the agent's cost function is parameterized as a linear combination of a set of features, and the weight of features is learned such that the expectation of the features under the learned cost function matches the empirical feature means under the demonstrations. This is achieved through an iterative algorithm where at every iteration, the difference between the feature expectations under the demonstrations and under the learned cost functions is utilized for updating the cost parameters.

We extend such an iterative cost learning framework to the multi-agent game setting. We assume that agents' dynamics are known and each agent's cost function $c^i(\cdot)$ is parameterized as a weighted sum of features, i.e., for each agent $i \in \ageset$, we have
\begin{align}\label{eq:cost-linear-vector}
    c^i(\state_t,\bm{\action}_t)= w^{i^\transpose} \phi^i (\state_t, \bm{\action}_t),
\end{align}
where $w^i$ is the vector of weight parameters for agent $i$ and $\phi^i(\state_t, \bm{\action}_t)$ is the vector of features for agent $i$. Note that for each agent $i$, the vector of features $\phi^i(\state_t, \bm{\action}_t)$ depends on the entire state vector and the actions of all agents. We define $\bm{w}=(w^1,\cdots, w^\agenum)$ to be the collection of cost weights for all agents. 

 Our goal is to find the weight parameters $\bm{w}$ such that the feature expectation  under the learned cost weights matches the empirical feature mean under the demonstrations. Let $\mathcal{D}_{\bm{w}}$ be the probability distribution over equilibrium trajectories induced by the cost parameters $\bm{w}$. We further let $\bar{\mathcal{D}}$ be the emirical distrubution of equilibrium trajectories in the interaction demonstrations. Moreover, we define a trajectory $(s,\bm{a})=(s_t,\bm{a}_t)_{t=1}^T$ to be the vector of agents' states and actions over a trajectorty of length $T$. 
 We assume that agents maintain an entropic cost equilibrium in their demonstrations. 

With a slight abuse of notation, for each agent $i$, we denote the empirical mean of the features under the interaction demonstrations by  $\mathbb{E}_{(s,\bm{a})\sim \bar{\mathcal{D}}} \, \phi^i(\state, \bm{\action})$. Likewise, we let $\mathbb{E}_{(s,\bm{a})\sim \mathcal{D}_{\bm{w}}} \, \phi^i(\state, \bm{\action})$ be the expected value of the features under the equilibrium trajectories induced by the weight vector $\bm{w}$.

We want to find weight parameters $\bm{w}$ to induce a probability distribution $\mathcal{D}$ over equilibrium trajectories such that feature expectation under the learned cost weights matches the empirical feature mean under demonstrations, i.e., $\mathbb{E}_{(s,\bm{a})\sim \bar{\mathcal{D}}} \, \phi^i(\state, \bm{\action})=\mathbb{E}_{(s,\bm{a})\sim \mathcal{D}_{\bm{w}}} \, \phi^i(\state, \bm{\action})$ for all agents $i\in \ageset$. To this end, we propose an iterative algorithm. We initialize the cost weights $\bm{w}$ for all agents. At each iteration of the algorithm, we iterate over all the agents. For agent $i$, we compute the difference between the feature expectations under the demonstrations and the current model
$\mathbb{E}_{(s,\bm{a})\sim \bar{\mathcal{D}}} \, \phi^i(\state, \bm{\action})-\mathbb{E}_{(s,\bm{a})\sim \mathcal{D}_{\bm{w}}} \, \phi^i(\state, \bm{\action}) $ and use this difference for updating the weight parameters
\begin{align}\label{eq:cost-update}
     w^i \leftarrow w^i -  \gamma \left( \mathbb{E}_{(s,\bm{a})\sim \bar{\mathcal{D}}} \, \phi^i(\state, \bm{\action})-\mathbb{E}_{(s,\bm{a})\sim \mathcal{D}_{\bm{w}}} \, \phi^i(\state, \bm{\action}) \right),
\end{align}
where $\gamma$ is the learning rate. Note that once $w^i$ is updated, the entire vector of cost weights $\bm{w}$ is in fact updated. In the single-agent setting where an agent is nosily optimizing a cost function, the update rule~\eqref{eq:cost-update} is the gradient of the likelihood of trajectories under weight parameters $\bm{w}$. In other words, in the single-agent setting,~\eqref{eq:cost-update} solves a maximum likelihood problem for finding weight parameters $\bm{w}$ that maximize the likelihood of demonstrations. In the multi-agent setting, if we fix the policy and cost parameters of all agents except agent $i$, the equilibrium policy of agent $i$ is the optimal policy which minimizes agent $i$'s cost, i.e., the problem reduces to a single-agent setting where agent $i$ noisly minimizes its cost. Thus, if we fix all agents except agent $i$, similar to the single-agent setting, we can interpret~\eqref{eq:cost-update} as a gradient-descent update rule. Once the cost parameters of agent $i$ are updated, we compute the feature expectations under the new set of cost parameters, and update the cost parameters for the next agent. We iterate over all $i$, and repeat this process until convergence.  This can be interpreted as a block coordinate descent method for solving the coupled parameter estimation problems for the multi-agent game. 

Algorithm~\ref{alg:MA-IRL} summarizes the algorithm steps. Note that in a multi-agent setting, once the cost parameters of one agent are updated, we need to recompute the feature expectation for updating the next agent's cost, since agents' cost parameters and feature expectations are interdependent, i.e., a change in one agent's cost parameters affects the expectation of features for other agents. This is due to the coupling between agents at entropic cost equilibrium.

\begin{algorithm}
    \caption{Multi-Agent Inverse Reinforcemenr Leaning}
    \label{alg:MA-IRL}
    \begin{algorithmic}[1]
        \State \textbf{Inputs}
        \State system dynamics (\ref{eq:dynamics}), agents' cost features $\phi^i(\cdot)$, and the set of interaction demonstrations. 
        \State \textbf{Initialization}
        \State for each agent $i \in \ageset$, compute the empirical feature mean under demonstrations $\mathbb{E}_{(s,\bm{a})\sim \bar{\mathcal{D}}} \, \phi^i(\state, \bm{\action})$
        \State initialize the cost weights $\bm{w}$
        \While{not converged}
        \For{each agent $i \in \ageset$}
            \State Compute feature expectations $\mathbb{E}_{(s,\bm{a})\sim \mathcal{D}_{\bm{w}}} \, \phi^i(\state, \bm{\action}) $ 
            \State Compute the difference $$\mathbb{E}_{(s,\bm{a})\sim \bar{\mathcal{D}}} \, \phi^i(\state, \bm{\action})-\mathbb{E}_{(s,\bm{a})\sim \mathcal{D}_{\bm{w}}} \, \phi^i(\state, \bm{\action}) $$ 
            \State{Update cost weight for agent $i$:
            $$ w^i \leftarrow w^i -  \gamma \left( \mathbb{E}_{(s,\bm{a})\sim \bar{\mathcal{D}}} \, \phi^i(\state, \bm{\action})-\mathbb{E}_{(s,\bm{a})\sim \mathcal{D}_{\bm{w}}} \, \phi^i(\state, \bm{\action}) \right)  $$} 
        \EndFor
         \EndWhile
        \State \Return cost parameters $w^i$ for all agents $i \in \ageset$
    \end{algorithmic}
\end{algorithm}

Each iteration of Algorithm~\ref{alg:MA-IRL} at line 8 requires computing agents' feature expectation $\mathbb{E}_{(s,\bm{a})\sim \mathcal{D}_{\bm{w}}} \, \phi^i(\state, \bm{\action})$ under ECE policies. However, computing $\mathbb{E}_{(s,\bm{a})\sim \mathcal{D}_{\bm{w}}} \, \phi^i(\state, \bm{\action})$ for general nonlinear games is not tractable. We propose to approximate this by sampling ECE policies through Algorithm~\ref{alg:ERE-trajectory} under the current cost parameter estimates. In each iteration, we sample $p$ ECE equilibrium trajectories $(s_1,\bm{a}_1), \cdots, (s_p,\bm{a}_p)$ using Algorithm~\ref{alg:ERE-trajectory}, and approximate these through the following
\begin{align}
\mathbb{E}_{(s,\bm{a})\sim \mathcal{D}_{\bm{w}}} \, \phi^i(\state, \bm{\action}) \approx \frac{1}{p}  \sum_{j=1}^p \phi^i(s_j,\bm{a}_j). 
\end{align}
Thus, with known system dynamics, Algorithm~\ref{alg:ERE-trajectory} lets us compare equilibrium trajectories under the learned cost parameters with the demonstration trajectories and update the cost parameters if needed. Algorithm~\ref{alg:exp-feature} summarizes the computation of feature expectations.

\begin{algorithm}
    \caption{Approximating Feature Expectations}
    \label{alg:exp-feature}
    \begin{algorithmic}[1]
        \State \textbf{Inputs}
        \State system dynamics (\ref{eq:dynamics}), agents' cost features $\phi^i(\cdot)$, agents' cost wights $w^i$, number of samples $p$ 
        \For{$ 1\leq j \leq p$}
            \State Sample an ECE trajectory $(s_j,\bm{a}_j) $
        \EndFor
        \State \Return $\frac{1}{p}  \sum_{j} \phi^i(s_j, \bm{a}_j)$
    \end{algorithmic}
\end{algorithm}

\section{Experiments}\label{sec:experiments}
In this section, we evaluate the performance of the MA-IRL algorithm in two different scenarios. In the first scenario, we consider a motion planning task with multiple agents that navigate to goal locations while avoiding collisions with each other. In the second scenario, we use the INTERACTION dataset \cite{interactiondataset} which contains trajectories in interactive traffic scenes, such as intersection, roundabouts, and highway merging, collected from different locations in different countries.

We compare our algorithm with a baseline method, continuous inverse optimal control (CIOC) \cite{levine2012continuous, sadigh2016planning}, which is an IOC algorithm for large, continuous domains and does not assume any feedback interaction among the observed agents. Additionally, for the INTERACTION dataset, we also compare with the intelligent driver model (IDM) \cite{treiber2000congested, Bhattacharyya2020} as our reference model.  IDM is a widely used expert-designed model to simulate traffic flow that is known to yield accurate predictions of drivers' trajectories.

\subsection{Motion Planning with Collision Avoidance}
We first consider a synthetic environment where 2 or 3 agents move to their goal locations and try to avoid collisions with each other. Demonstrations for all scenarios are generated by solving an approximate entropic cost equilibrium as described in Sec.~\ref{sec:eq-policies}. Fig.~\ref{fig:synthetic_demonstration} illustrates the demonstration trajectories for 2 and 3 agents scenarios. Each agent wants to minimize a cost function which is linear in the set of features: tracking a reference trajectory, penalizing close proximity to other agents, and control effort. Plots of different features are shown in Fig.~\ref{fig:synthetic_feature}. The reference trajectory is a straight line between initial location and goal location. Each agent has different weightings on the features, which leads to different interactive behaviors as shown in Fig.~\ref{fig:synthetic_demonstration}.
\begin{figure}[!t]
\centering
\begin{subfigure}{0.45\columnwidth}
  \centering
  \includegraphics[width=\columnwidth]{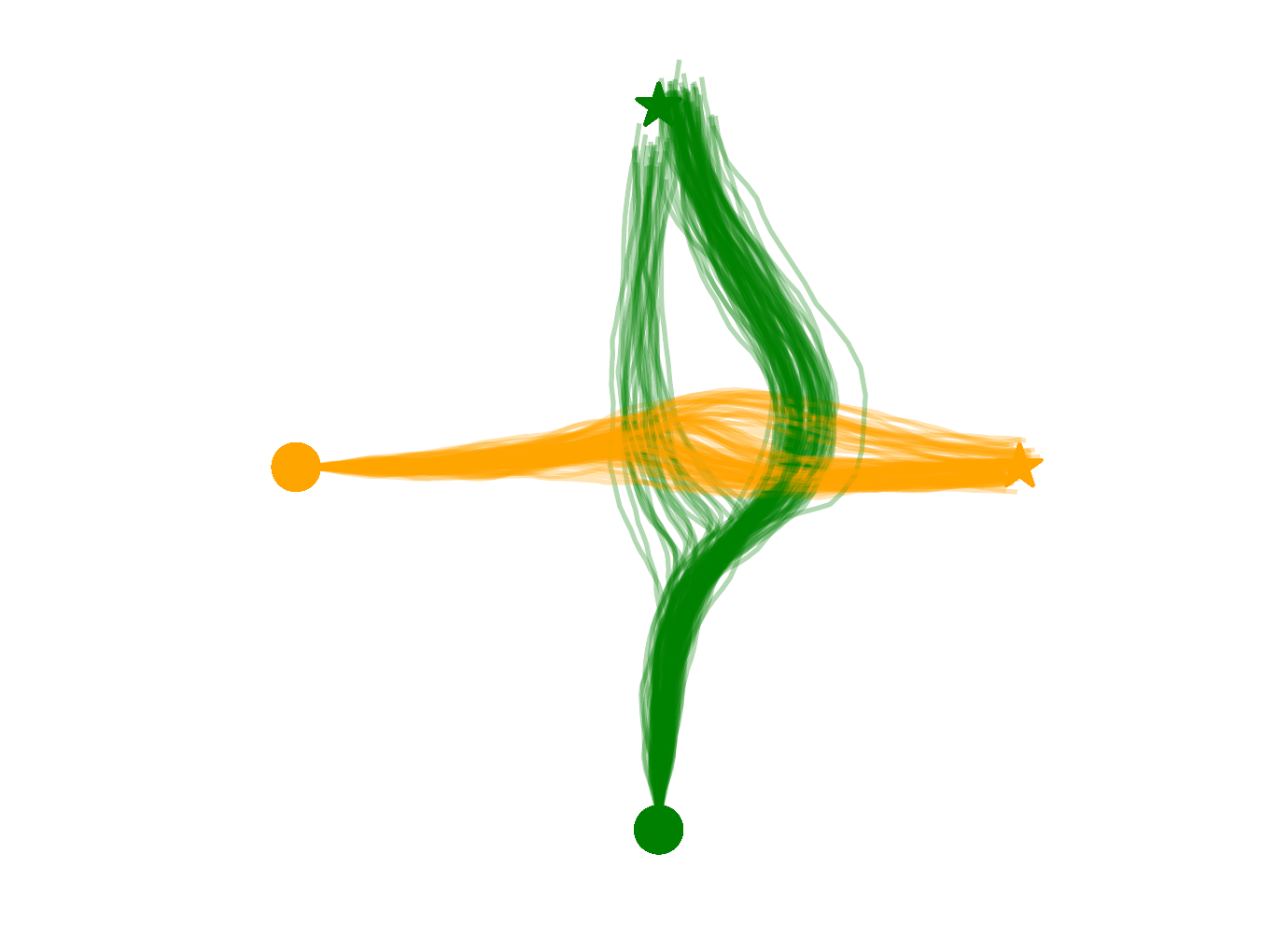}
  \caption{Two-player case}
  \label{fig:two_player_demonstration}
\end{subfigure}
\hfill
\begin{subfigure}{0.45\columnwidth}
  \centering
  \includegraphics[width=\columnwidth]{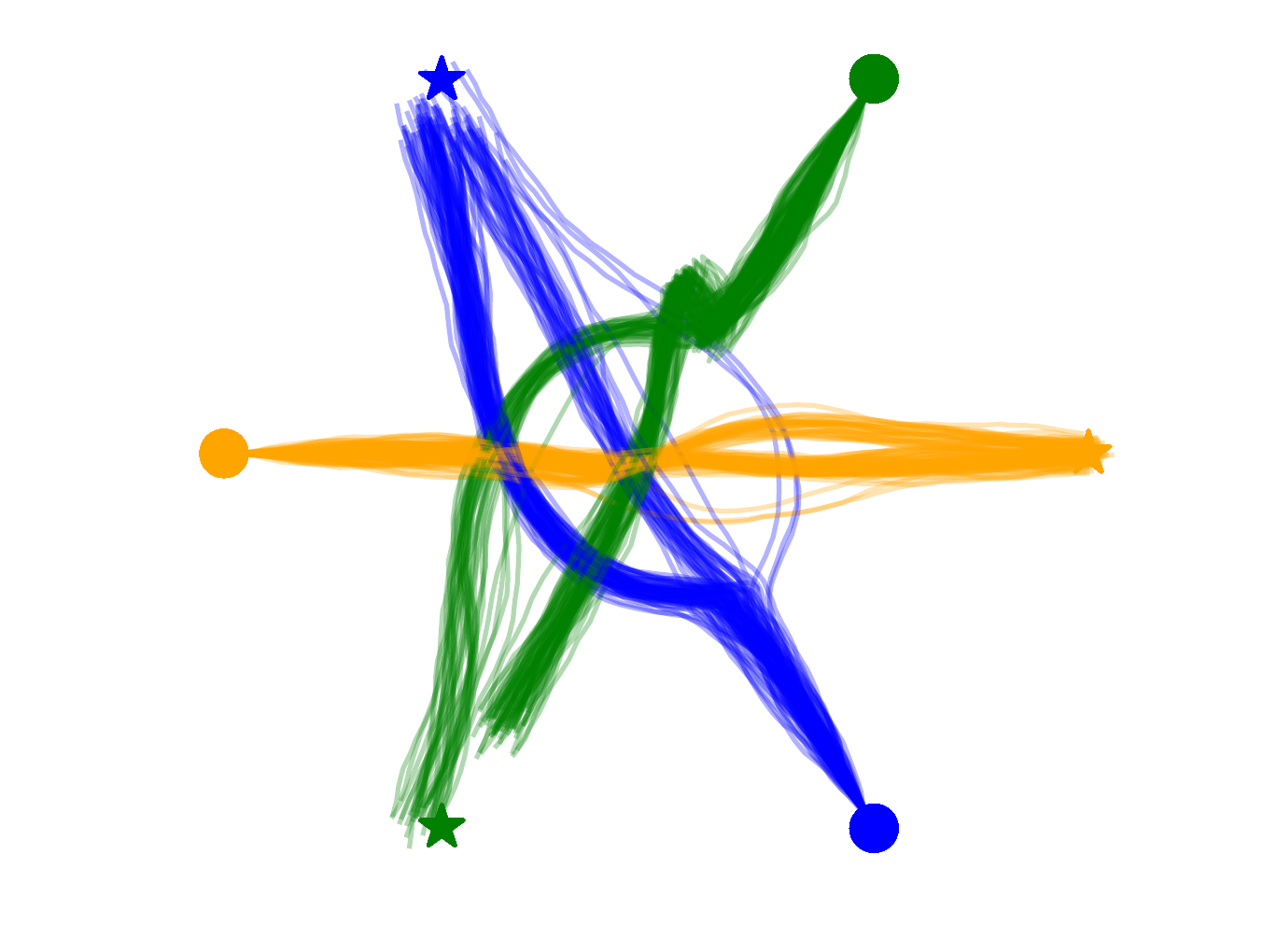}
  \caption{Three-player case}
  \label{fig:three_player_demonstration}
\end{subfigure}
\caption{Demonstrations for two-player and three-player collision avoidance motion planning. Trajectories of different players are shown in green, yellow, and blue, respectively. Players have different cost functions. Due to the stochastic policy, we observe different modes of interactions.}
\label{fig:synthetic_demonstration}
\end{figure}

\begin{figure}[!t]
\centering
\begin{subfigure}{0.45\columnwidth}
  \centering
  \includegraphics[width=\columnwidth]{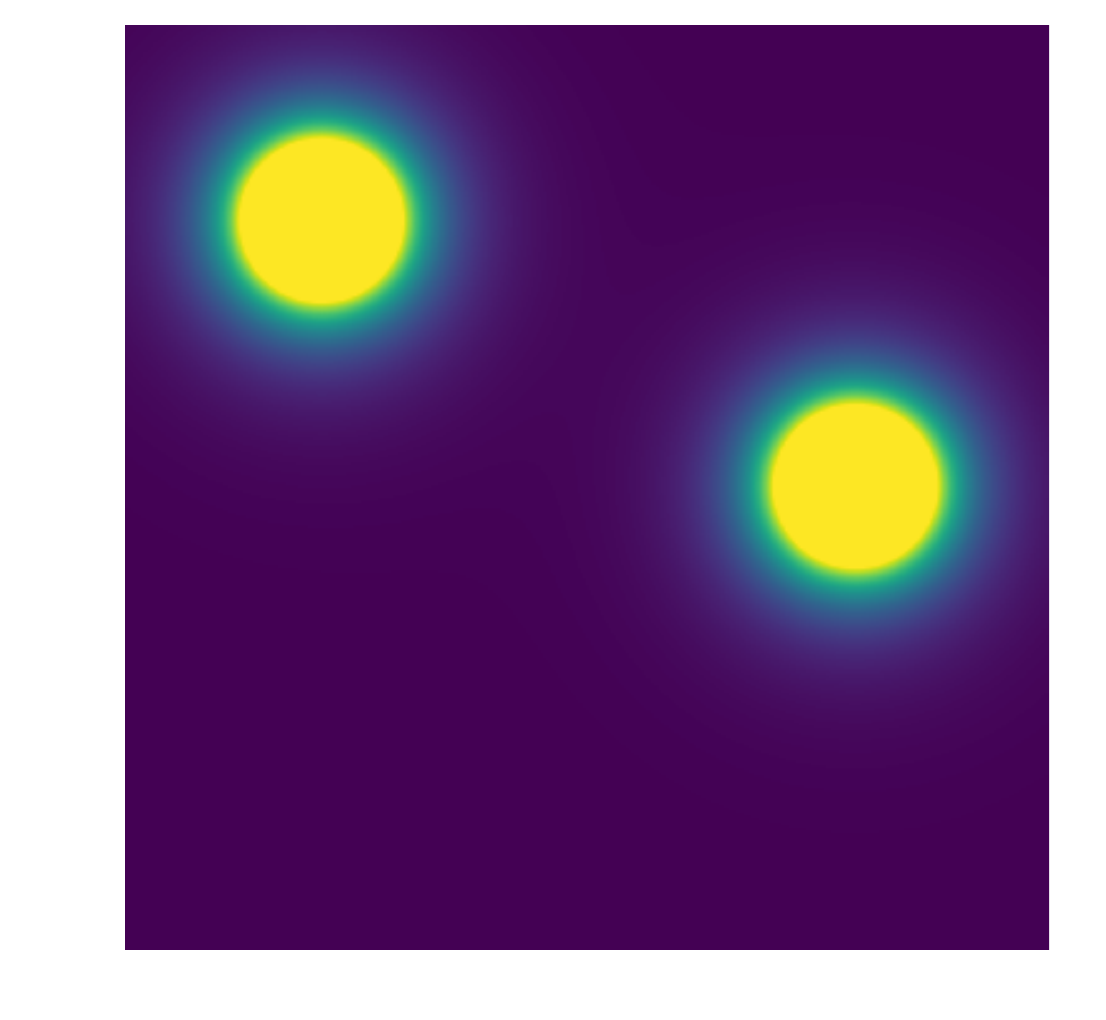}
  \caption{Collision avoidance feature}
  \label{fig:collision_feature}
\end{subfigure}
\hfill
\begin{subfigure}{0.45\columnwidth}
  \centering
  \includegraphics[width=\columnwidth]{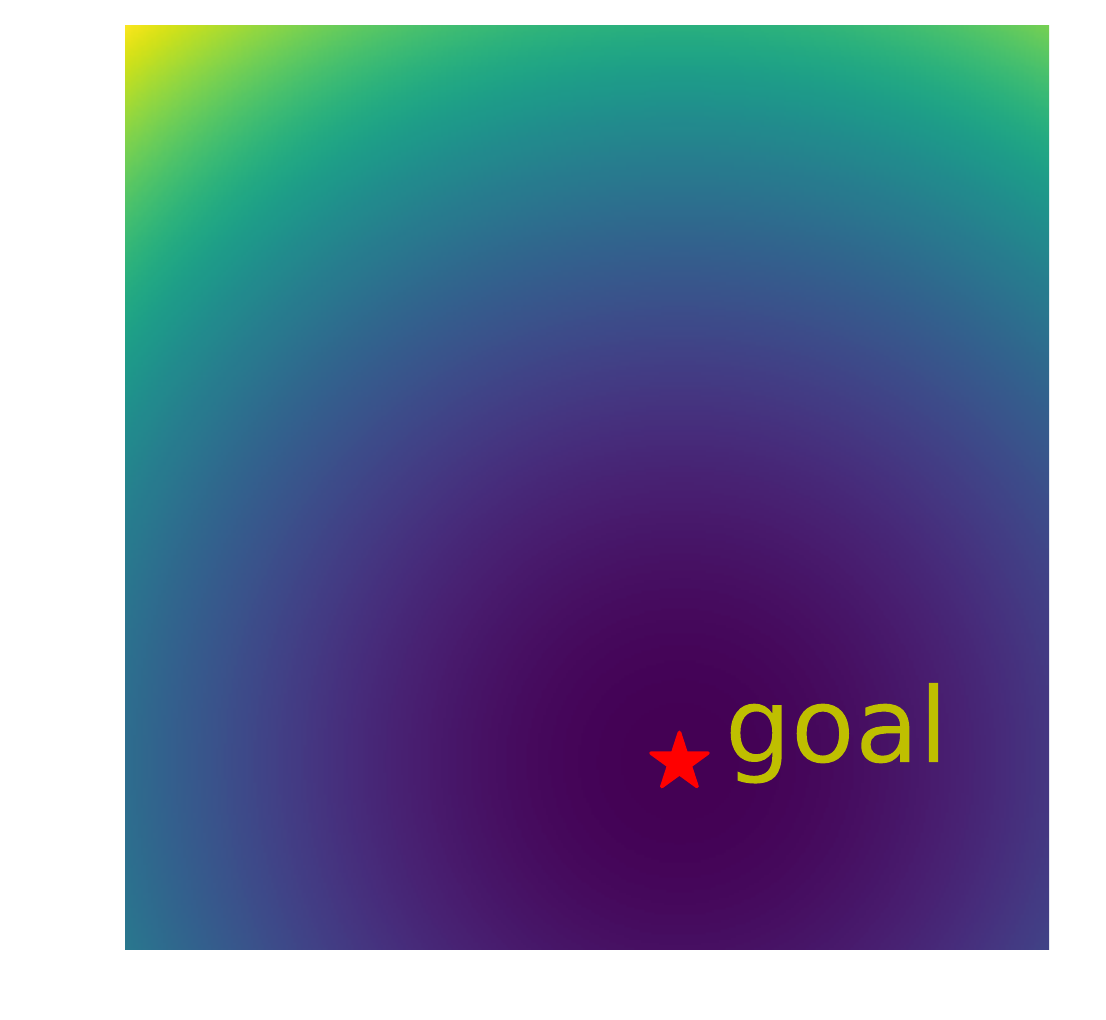}
  \caption{Reach goal location feature.}
  \label{fig:goal_feature}
\end{subfigure}
\caption{Cost features for motion planning with collision avoidance task. Costs are high close to other agents and away from the goal location.}
\label{fig:synthetic_feature}
\end{figure}

The demonstration dataset contains $200$ trajectories. We use MA-IRL to learn the cost function coefficients for all agents jointly. To apply CIOC method, we learn the cost coefficients of each agent individually and treat all other agents as obstacles. In our implementation of the CIOC method, we use a generic optimization solver. The computation time scales cubically with the planning horizon. For computational tractability, we partition the demonstration trajectories into sections of shorter trajectories.

We evaluate the performance of both algorithms in test scenarios where the initial configurations are generated randomly and are different from the demonstration data set. To evaluate the performance, we first solve the ECE solutions with the learned cost functions using both our MA-IRL approach and the CIOC approach, as well as  with the true cost functions in test scenarios. Then, we simulate the stochastic equilibrium policies for $200$ trials in each case and compute the feature expectations. The cost of each simulation for all agents are then computed with the true cost function. If the feature distribution from the learned cost functions is similar to the feature distribution in the demonstration data set, it means our learned costs are close to the true costs. The simulation results are shown in Table~\ref{tbl:synthetic_two_player_feature_kl_divergence} and \ref{tbl:synthetic_three_player_feature_kl_divergence}. 
\begin{table}[h]
  \centering
  \resizebox{\columnwidth}{!}{
    \begin{tabular}{lccccccccc}
    \toprule
    \multicolumn{2}{c}{\bf KL DIV} & & \multicolumn{3}{c}{\bf agent 1} & &  \multicolumn{3}{c}{\bf agent 2}  \\
    \cmidrule{4-6} \cmidrule{8-10} 
    & & & tracking & control & obstacle  & &  tracking & control & obstacle \\
    \midrule
    \multirow{2}{*}{\bf task 1} 
    & MA-IRL (ours)& & \textbf{0.054} & \textbf{0.024} & \textbf{0.044} & & \textbf{0.134} & \textbf{0.059} & \textbf{0.044} \\
    & CIOC         & & 1.701      & 0.165         & 1.766          & & 0.836          & 0.065          & 1.766 \\
    \midrule
    \multirow{2}{*}{\bf task 2} 
    & MA-IRL (ours) & & \textbf{0.025}& \textbf{0.080}          & \textbf{0.061} & & \textbf{0.034} & \textbf{0.053} & \textbf{0.061} \\
    & CIOC          & &  1.904        & 0.146 & 1.086          & & 0.152          & 0.144          & 1.086 \\
    \bottomrule
    \end{tabular}
  } 
  \caption{Two-agent collision avoidance simulations. KL-divergence of feature distribution between demonstrations and simulations from learned costs. Smaller values are better.}
  \label{tbl:synthetic_two_player_feature_kl_divergence}
\end{table}


We compute the Kullback–Leibler divergence, which captures the difference between two distributions. Lower values means the distributions are closer.
In the two different test tasks, MA-IRL algorithm achieves closer similarity to true feature distribution, which means the MA-IRL learned cost functions generalize to different tasks better than CIOC.

\begin{table*}[t]
  \centering
  \resizebox{\textwidth}{!}{
    \begin{tabular}{lccccccccccccc}
    \toprule
    \multicolumn{2}{c}{\bf KL DIV} & & \multicolumn{3}{c}{\bf agent 1} & &  \multicolumn{3}{c}{\bf agent 2} & & \multicolumn{3}{c}{\bf agent 3}  \\
    \cmidrule{4-6} \cmidrule{8-10} \cmidrule{12-14} 
    & & & tracking & control & obstacle  & &  tracking & control & obstacle & &  tracking & control & obstacle \\
    \midrule
    \multirow{2}{*}{\bf task 1} 
    & MA-IRL (ours) 
    & & \textbf{0.031} & 0.063          & \textbf{0.167} & & \textbf{0.074} & 0.144          & \textbf{0.052}
    & & \textbf{0.050 }& 0.066 & \textbf{0.127} \\
    & CIOC   
    & &  2.583         & \textbf{0.054} & 0.939          & & 1.083          & \textbf{0.039} & 0.489 
    & & 0.082 & \textbf{0.022} & 0.629 \\
    \midrule
    \multirow{2}{*}{\bf task 2} & MA-IRL (ours) 
    & &  \textbf{0.096} & \textbf{0.019} & \textbf{0.116} & & \textbf{0.054} & \textbf{0.019} & \textbf{0.148} & & \textbf{0.030} & \textbf{0.108} & \textbf{0.080} \\
                                & CIOC   
    & &  1.104 & 0.073 & 0.701 & & 1.685 & 0.032 & 0.780 & & 2.198 & 0.171 & 0.487 \\
    \bottomrule
    \end{tabular}
  } 
  \caption{Three-agent collision avoidance simulations. KL-divergence of feature distribution between demonstrations and simulations from learned costs. Smaller values are better.}
  \label{tbl:synthetic_three_player_feature_kl_divergence}
\end{table*}
\begin{table}[t]
\centering
\begin{tabular}{@{}c c c c c@{}}
\toprule
\multicolumn{2}{c}{avg. dist. to goal} &  true  & MA-IRL   &  CIOC    \\ 
\midrule
\multirow{2}{*}{\bf Task 1} & agent 1  & 0.205$\pm$0.103 & 0.229$\pm$0.117 & 0.144$\pm$0.083  \\ 
                            & agent 2  & 0.319$\pm$0.153 & 0.323$\pm$0.160 & 0.248$\pm$0.101  \\
\midrule 
\multirow{2}{*}{\bf Task 2} & agent 1  & 0.720$\pm$0.702 & 0.864$\pm$0.737 & 0.381$\pm$0.428\\ 
                            & agent 2  & 2.079$\pm$1.880 & 1.797$\pm$1.953 & 1.347$\pm$1.624  \\           
\bottomrule
\end{tabular}
\caption{Task-relevant statistics for two player collision avoidance case. Distances to goal location at the end of simulations are shown. The statistics of the MA-IRL approach are closer to the true cost samples.}
\label{tbl:two_player_task_statistics}
\end{table}


In addition, we follow prior work and also compute task-relevant statistics to evaluate how much the recovered cost and corresponding stochastic policy resembles the demonstrations. We compute the distance to goal location at the end of simulation ($6$ seconds) and the results are shown in Table.~\ref{tbl:two_player_task_statistics}. We can see that MA-IRL better resembles the true cost functions.

\subsection{INTERACTION dataset}
We further apply the MA-IRL algorithm on a real world traffic dataset, the INTERACTION dataset~\cite{interactiondataset}, which has highly interactive driving scenarios collected from different countries. Scenarios such as a roundabout, highway merging, and intersections are included.
\subsubsection{Data Processing}
We pick a highway merging scenario as it involves the vehicles negotiating their merging actions, leading to challenging game-theoretic planning for all vehicles. Among all the merging trajectories, each data point consists of three vehicles: a merging vehicle whose trajectory starts from the onramp and ends on a main highway lane, a leader vehicle that is ahead of the merging vehicle, and a follower vehicle that is behind the merging vehicle in the highway. Leader vehicles are treated as obstacles which do not interact with the other two vehicles, and we assume that the merging vehicle and the follower vehicle follow an ECE solution. We discard the trajectories that are too long or too short (longer than 9 seconds or shorter than 5 seconds) and end up with 87 trajectories in total. All $87$ data points are recorded in the same highway onramp in China from different recordings. The average time to finish merging is $6.3$ seconds in the data set. Fig.~\ref{fig:interaction_example} shows an example snapshot, where car $8$ is merging onto highway after car $13$.
\begin{figure*}[h]
    \centering
    \includegraphics[width=0.8\linewidth]{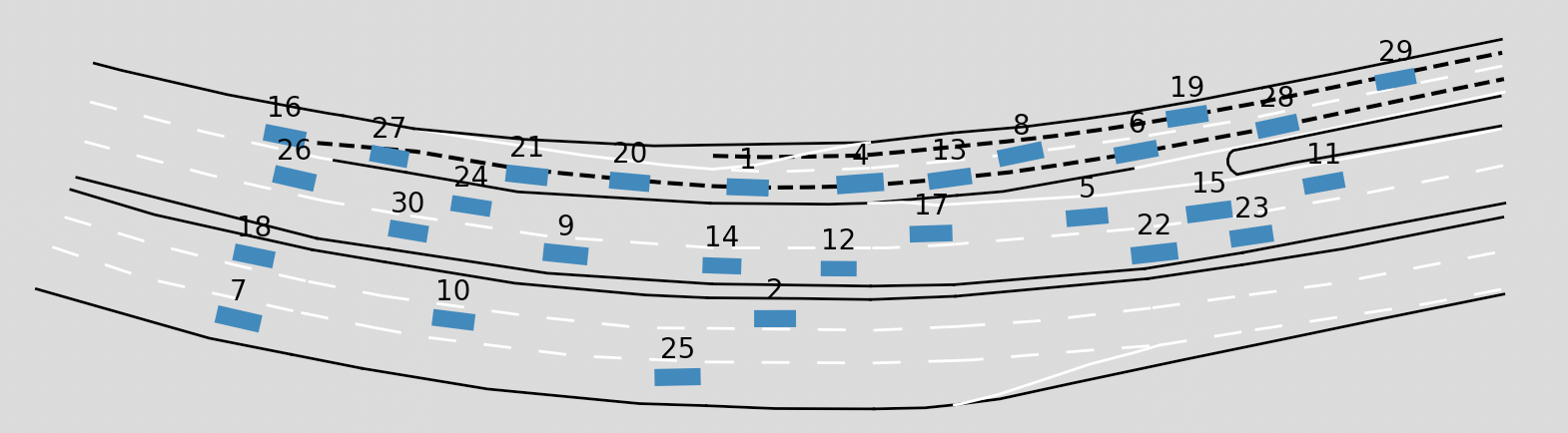}
    \caption{A snapshot of highway merging scenario in the INTERACTION data set. Vehicle $8$ is a merging vehicle between car $13$ and $6$.}
    \label{fig:interaction_example}
\end{figure*}

\subsubsection{Features} For this challenging and complex driving task, we propose features that capture efficiency (driving forward and achieving the lane merge for the merging vehicle), comfort and energy efficiency (penalizing large accelerations), and safety (penalizing relative speed and a Gaussian-cost on close proximity with other cars). In total, we learn the cost coefficients for $9$ features for both merging and follower vehicles.

To evaluate the learning results on the INTERACTION data set, since we do not know the true cost function, we cannot compute the corresponding cost distribution. Instead, we compute the root mean squared error on trajectories that are generated from the learned cost functions. For each trajectory tuple in the data set, we run $10$ trials using the ECE policy with the learned cost function from both MA-IRL and CIOC. Then, we compute the average deviation from true trajectories over 5 seconds. For position errors, MA-IRL achieves significantly better prediction accuracy than the CIOC model due to the interactive nature of this scenario. IDM's performance is roughly on par with MA-IRL. We note that IDM is a popular expert designed, and empirically validated driving model for traffic simulation, and is only applicable for car following scenarios. Hence, IDM acts as aproxy for the unknown ground truth policy in this scenario.  Moreover, we learn the parameters of IDM to specifically fit this data set, thereby directly approximating the control policy of the vehicles in the data set. 
\begin{figure}[t]
\centering
    \includegraphics[width=\columnwidth]{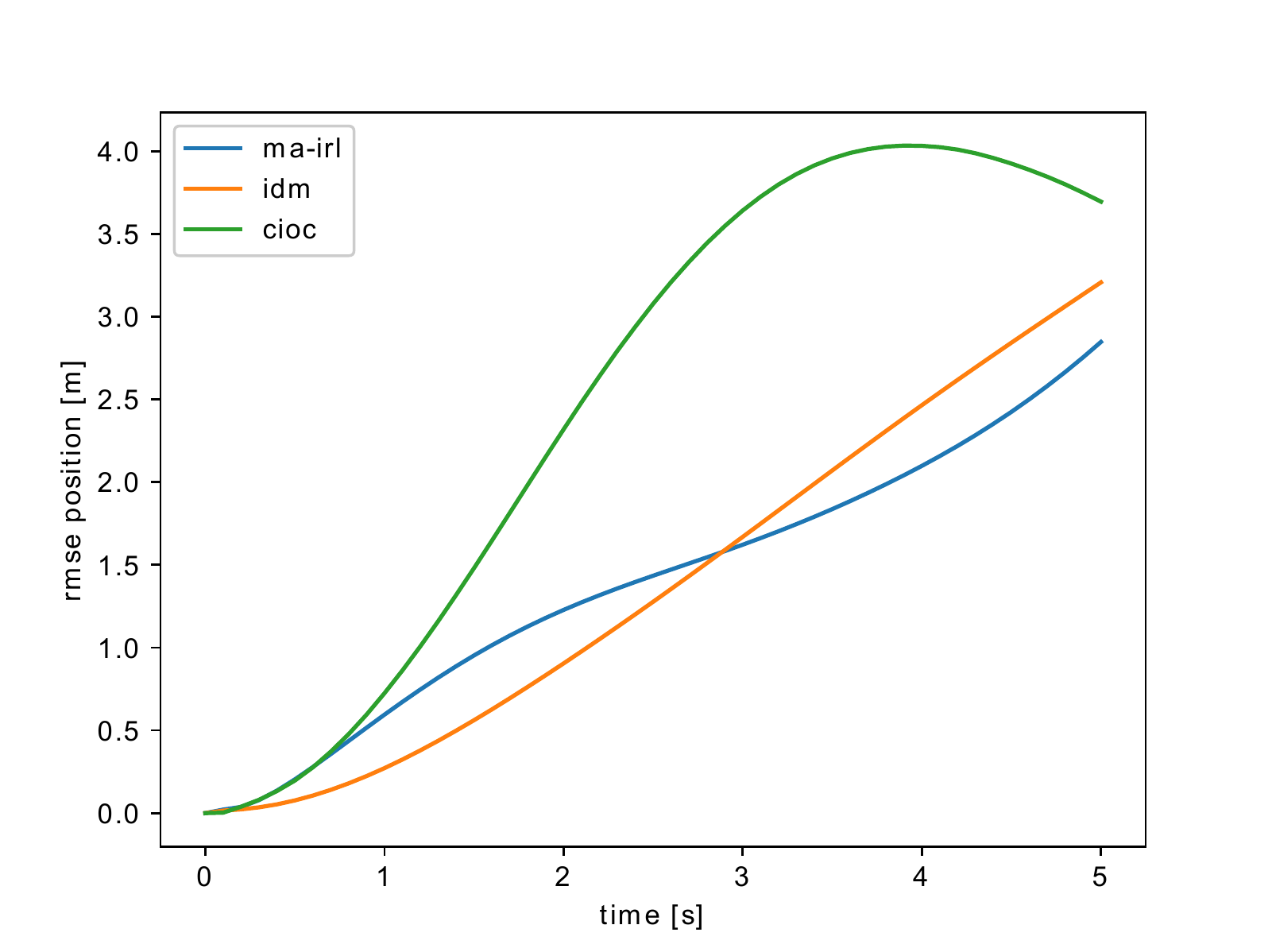}
    \caption{Root Mean Square Error }
    \label{fig:interaction_rmse}
\end{figure}

In addition, we follow previous work and evaluate the learned policy by computing task-related statistics from demonstrations and generated samples. We measure average speed of both merging and following cars, the average Euclidean distance between leader and merging cars, and average Euclidean distance between merging and following cars. The results are summarized in Table~\ref{tbl:interaction_statistics}. In Fig.~\ref{fig:interaction_rmse}, the position rmse of MA-IRL is similar to that of IDM and better than CIOC. From Table~\ref{tbl:interaction_statistics}, we can see that MA-IRL achieves good approximation results in average speed of merging vehicle and follower vehicle, which is on par with IDM's performance and better than CIOC. For average distance between merging vehicle and other vehicles, the performance of all three algorithms are similar and have larger deviation than speed statistics. We think the reason is that average distance is subject to more uncertainty.
\begin{table}[t]
\centering
\begin{tabular}{@{}l|c|c|c|c@{}}
\toprule
task statistics      & true  & MA-IRL  & IDM  & CIOC    \\ 
\midrule
M avg. speed [m/s]         & 3.272   & 3.372   & 2.876  & 2.949 \\ 
F avg. speed [m/s]        & 2.893   & 2.592   & 2.574  & 2.410 \\
ML avg. dist [m]        & 7.770   & 7.438   & 7.951  & 8.041 \\
MF avg. dist [m]        & 7.447   & 8.716   & 6.844  & 7.009 \\
\bottomrule
\end{tabular}
\caption{Task statistics for generated simulations using learned cost function/policy compared to human demonstrations from the same initial configuration.}
\label{tbl:interaction_statistics}
\end{table}

\section{Conclusion}\label{sec:concl}
In this paper, we presented an algorithm for Maximum Entropy Inverse Reinforcement Learning in multi-agent settings. To enable learning from boundedly rational agents such as humans, we defined a notion of noisy equilibrium called Entropic Cost Equilibrium (ECE). We proved that this noisy equilibrium is indeed an extension of the maximum entropy principle to the multi-agent setting. We then proved that ECE policies can be obtained in closed form for the special class of linear quadratic Gaussian games. We further presented an algorithm for approximating ECE policies for general nonlinear games. Finally, knowing how to find maximum-entropy multi-agent policies, we provided an iterative algorithm for learning agents' costs from a set of demonstrations when cost functions are represented as a linear combination of a set of features. We verified and validated our algorithm using both synthetic and real-world data set demonstrating that our MA-IRL algorithm can successfully capture the interactions between agents and learn accurate models of agents' costs.

\emph{Limitations and Future Work:} While our simulations and experiments show the effectiveness of our MA-IRL algorithm, a key future direction is to enable learning general nonlinear costs. In this paper, we assumed that agents' costs can be represented as weighted sum of a set of features. An important future direction is to relax this assumption. Moreover, to avoid handpicking the set of relevant features a-priori, we would like to study the extensions of this work to enable learning cost functions that are represented in the form of function approximators such as neural networks. Finally, the current work can be extended to the case of unknown system dynamics as well. We assumed that the system dynamics were known and fixed. We believe that our method can be further extended to systems with unknown dynamics.

\appendices
\section{Proof of Theorem~\ref{theorem:LQ-game}}\label{appendix-proof-thereom2}
We provide the proof for Theorem~\ref{theorem:LQ-game}. We know from Theorem~\ref{theorem:max-ent-equi} that to find the ECE policies of a linear quadratic game $G=(\cc{S},\bm{\cc{A}}, f, g, p_0, \bm{c})$, we can equivalently find the Nash equilibrium policies of the auxiliary game $\tilde{G}=(\cc{S},\bm{\cc{A}}, f, g, p_0, \tilde{\bm{c}},\horizon)$. The key idea is that at Nash equilibria of the game $\tilde{G}$, the cost-to-go or the value function for each agent at any given state can be represented in closed form via a quadratic function of the state. Using dynamic programming, the closed-form solution of the cost-to-go function can be propagated backwards in time to find both the Nash equilibirum policy and the cost-to-go of agent $i$. 

More precisely, let $\bm{\pi}^*$ denote a set of Nash equilibirum policies of the game $\tilde{G}$. Consider an agent $i \in \ageset$. Fix the policies of all the other agents $\bm{\pi}^{{-i}^*}$. Then, using Definition~\ref{def:nash}, for the game $\tilde{G}$, the (Nash) equilibirum policy of the agent $\pi^{i^*}$ optimizes 
\begin{align}\label{eq:nash-tilde}
\begin{split}
    \min_{\pi^i} \, \mathbb{E}_{\pi^i, \bm{\pi}^{{-i}^*}} \sum_{k=1}^{\horizon} \frac{1}{2}\Bigg( \state_k^\transpose Q^i \state_k &+ l_i^\transpose s_k +   \sum_{j\in \ageset}  {\action_k^j}^\transpose R^{ij} {\action^j_k}\Bigg)-\\ & \qquad \qquad \sum_{k=1}^\horizon \cc{H}(\pi^i_k(.|\state_t)).
\end{split}
\end{align}
First, note that for any policy $\pi^i(.|s_t)$, since the dynamics are linear and the cost is quadratic, the expectation of the terms inside the parenthesis in~\eqref{eq:nash-tilde}, depends only the first and second order moments of the policy $\pi^i$. Therefore, for any fixed first and second order moments of the policy $\pi^i$, the optimal policy $\pi^{i^*}$ will be the distribution with maximum entropy subject to fixed first and second order moments. Leveraging the fact that with fixed first and second order moments, the maximum entropy distribution is a normal distribution (see for instance, Theorem 8.6.5 in~\cite{cover2012elements}), we can conclude that the optimal policy for each agent $i$ at any time step $\pi^{i^*}_t$ is normally distributed. Hence, it remains to find the mean and covariance of the optimal policy at any given time step. For every agent $i$, let the mean and covariance of agent $i$'s policy be represented by $\mu^{i}_t$ and $\Sigma^{i}_t$ at every time step $t$.
The optimal solution to~\eqref{eq:nash-tilde} can be found using dynamic programming. Let $V_t^{i^*}(s_t)$ denote the cost-to-go of agent $i$ at Nash equilibrium at a given time step $t$
\begin{align}\label{eq:value-definition}
\begin{split}
    V_t^{i^*}(\state_t) := &\min_{\pi^i} \, \mathbb{E}_{\pi^i, \bm{\pi}^{{-i}^*}} \sum_{k=t}^{\horizon} \frac{1}{2}\Bigg( \state_k^\transpose Q^i \state_k + {l^i}^\transpose \state_k +\\ &\qquad \sum_{j\in \ageset}  {\action_k^j}^\transpose R^{ij} {\action^j_k}\Bigg) - \sum_{k=t}^\horizon \cc{H}(\pi^i_k(.|\state_k)).
    \end{split}
\end{align}

We prove via induction that the cost-to-go for each agent at a given time step is a quadratic function of the state. We assume for time $t+1$, we have
\begin{align}\label{eq:value-function-structure-1}
    V_{t+1}^{i^*}(\state_{t+1}) = \frac{1}{2}\state^\transpose Z_{t+1}^i \state_{t+1} + {\xi_{t+1}^i}^\transpose \state_{t+1} + n_{t+1}^i,
\end{align}
where $Z_{t+1}^i$, $\xi_{t+1}^i$, and $n_{t+1}^i$ are the coefficients of the quadratic cost-to-go at time $t+1$. 
We will propagate the cost-to-go structure~\eqref{eq:value-function-structure-1} backwards in time to prove that
\begin{align}\label{eq:value-function-structure}
    V_t^{i^*}(\state_t) = \frac{1}{2}\state^\transpose Z_{t}^i \state_t + {\xi_t^i}^\transpose \state_t + n_t^i.
\end{align}
We prove this via dynamic programming. For time step $t$, we have
\begin{align}\label{eq:value-function-t}
    V_t^{i^*}(\state_t) = \min_{\pi^i_t} \, \mathbb{E}_{\pi^i_t, \bm{\pi}_t^{{-i}^*}}  \frac{1}{2}&\Bigg( \state_t^\transpose Q^i \state_t +\sum_{j\in \ageset}  {\action_t^j}^\transpose R^{ij} {\action^j_t}\Bigg)- \nonumber \\ &  \cc{H}(\pi^i_t(.|\state_t))+\mathbb{E}_{s_{t+1}}  \, V_{t+1}^{i^*}(\state_{t+1}).
\end{align}
Using dynamics~\eqref{eq:dynamics-linear}, the structure of cost-to-go function for time $t+1$ from~\eqref{eq:value-function-structure-1}, and the fact that the policies for every time step are normally distributed, one can verify that

\begin{align}\label{eq:expected-value-function}
&\mathbb{E}\, V_{t+1}^{i^*}(\state_{t+1}) = \frac{1}{2} \Big[  \state_t^\transpose A^\transpose Z_{t+1}^i A \state_t + \sum_{j \in \ageset} {\mu^j_t}^\transpose {B^j}^\transpose Z_{t+1}^i A \state_t +\nonumber \\ &\state_t^\transpose A^\transpose Z_{t+1}^i \sum_{j \in \ageset} B^j \mu_t^j + \sum_{j \in \ageset} \left(B^j \mu_t^j \right)^\transpose Z_{t+1}^i  + n_{t+1}^i + \text{tr}\Big( \nonumber \\ & Z_{t+1}^i \sum_{j\in \ageset} B^j \Sigma_t^j {B^j}^\transpose + Z_{t+1}^i \Big) + {\xi^i}^\transpose_{t+1} \big(A\state_t + \sum_{j\in\ageset} B^j \mu_t^j \big) \Big].   
\end{align}
Moreover, we know that for a normal policy $\pi_t^i$ with mean $\mu^i_t$ and covariance $\Sigma^i_t$, the entropy of the policy is 
\begin{align}\label{eq:entropy-policy}
    \cc{H}(\pi^i_t (.|\state_t)) = \frac{m^i}{2} \log (2 \pi e) + \frac{1}{2} \log \det (\Sigma_t^i),
\end{align}
where $m^i$ is the dimension of the action space of agent $i$, and $e$ is the Euler's number. Using~\eqref{eq:expected-value-function} and~\eqref{eq:entropy-policy},~\eqref{eq:value-function-t} which is the cost-to-go at time $t$,~\eqref{eq:value-function-t} can be rewritten as a minimization with respect to $\mu^i_t$ and $\Sigma^i_t$ when we fix the mean and covariance of other agents' policies
\begin{align}\label{eq:value-function-simple}
    &V_t^{i^*}(s_t)= \min_{\mu^i, \Sigma^i} \, \mathbb{E}_{\pi^i, \bm{\pi}^{{-i}^*}} \frac{1}{2} [\state_t^\transpose Q^i \state_t + \sum_{j\in \ageset}\mu_t^{j^\transpose} R^{ij} \mu_t^{j} + \nonumber \\&\state_t^\transpose A^\transpose Z_{t+1}^i A \state_t + \sum_{j \in \ageset} \mu_t^j B^{j^\transpose} Z_{t+1}^i A \state_t + \nonumber \\
    &\state_t^\transpose A^\transpose Z^i_{t+1} \sum_j B^j \mu^j + (\sum_{j\in\ageset} B^j \mu^j_t )^\transpose Z_{t+1}^i (\sum_{j\in\ageset} B^j \mu^j_t )]+ \nonumber \\ &\xi^{i^\transpose}_{t+1} (A\state_t+\sum_{j\in \ageset} B^j \mu^j )+ n_{t+1}^i - \frac{1}{2}\log\det(\Sigma_t^i) - \frac{1}{2} m^i 2\pi e \nonumber \\ & +\frac{1}{2}\sum_{j\in \ageset} \text{tr}(R^{ij}\Sigma_t^j + Z_{t+1}^i(\sum_{j\in \ageset} B^j\Sigma_t^j B^{j^\transpose})+Z_{t+1}^i).
\end{align}
The objective function of~\eqref{eq:value-function-simple} is a convex quadratic function of $\mu^i_t$ and a convex function of $\Sigma^i_t$. Taking the derivative of the objective function in~\eqref{eq:value-function-simple} with respect to $\mu^i_t$ and $\Sigma^i_t$ and setting the derivative equal to zero, one can verify that $\mu^{i^*}_t$ and $\Sigma^{i^*}_t$ have the structure of~\eqref{eq:mu} and~\eqref{eq:sigma}. By replacing $\mu^{i^*}_t$ and $\Sigma_t^{i^*}$ in~\eqref{eq:value-function-simple} and rearranging the terms, one can see that $V_t^{i^*}(s_t)$ is also a quadratic function of the state $\state_t$. This proves that at every time step $t$, $V_t^{i^*}(\state_t)= \frac{1}{2}\state^\transpose Z_{t}^i \state_t + {\xi_t^i}^\transpose \state_t + n_t^i$. By matching the coefficients of this quadratic function, recursions~\eqref{eq:Z-update},~\eqref{eq:xi-update},~\eqref{eq:F-update} and~\eqref{eq:beta_update} are obtained.
Note that we also need to verify the base case for our induction, i.e, the cost-to-go at the final time step $V^{i^*}_T(s_T)$ is quadratic. At the final time step $T$, the optimal policy $\pi^{i^*}_T$ is a zero mean normal distribution with covariance matrix $\Sigma_T^i = {(R^i)}^{-1}$. Plugging in this policy, we verify that in the final time step the cost-to-go is also quadratic in state. This proves the base case for our induction, which completes our proof that the optimal cost-to-go at any time step is quadratic in states. 
Now that we have proved that at every time step, $V_t^{i^*}(s_t)$ is of the form~\eqref{eq:value-function-structure}, we can obtain  the recursions~\eqref{eq:P-recur} and~\eqref{eq:alpha-recur} on the matrix $P^i_t$ and the vector $\alpha^i_t$ by replacing~\eqref{eq:mu} and~\eqref{eq:sigma} in~\eqref{eq:value-function-t} and matching the coefficients.


\ifCLASSOPTIONcaptionsoff
  \newpage
\fi







\end{document}